\newcommand{\rrvert}{\vert}
\newcommand{\llvert}{\vert}
\newcommand{\eqref}[1]{(\ref{#1})}
\newcommand{\R}{\mathbb R}
\newcommand{\Z}{\mathbb Z}
\newcommand{\N}{\mathbb N}
\newcommand{\Prob}{\mathbb P}
\newcommand{\E}{\mathbb E}
\newcommand{\ident}{\mathbh{1}}
\newtheorem{teo}{Theorem}[section]
\newtheorem{lem}[teo]{Lemma}
\newtheorem{cor}[teo]{Corollary}
\begin{document}
\begin{frontmatter}

\title{Branching random walks and multi-type contact-processes on the
percolation\break cluster of ${\mathbb Z}^{\bolds{d}}$}
\runtitle{BRW{s} and multi-type CP{s} on the percolation cluster
of $\mathbb{Z}^d$}

\begin{aug}
\author[A]{\fnms{Daniela}~\snm{Bertacchi}\ead[label=e1]{daniela.bertacchi@unimib.it}}
\and
\author[B]{\fnms{Fabio}~\snm{Zucca}\corref{}\ead[label=e2]{fabio.zucca@polimi.it}}
\runauthor{D.~Bertacchi and F.~Zucca}

\affiliation{Universit\`a di Milano--Bicocca and Politecnico di Milano}

\address[A]{Dipartimento di Matematica e Applicazioni\\
Universit\`a di Milano--Bicocca\\
via Cozzi 53\\
20125 Milano\\
Italy\\
\printead{e1}}

\address[B]{Dipartimento di Matematica\\
Politecnico di Milano\\
Piazza Leonardo da Vinci 32\\
20133 Milano\\
Italy\\
\printead{e2}}
\end{aug}

\received{\smonth{11} \syear{2013}}
\revised{\smonth{2} \syear{2014}}

%
\begin{abstract}
In this paper we prove that, under the assumption of
quasi-transitivity, if a branching random
walk on ${{\mathbb Z}^d}$ survives locally (at arbitrarily large times
there are
individuals alive at the
origin), then so does the same process when restricted to the infinite
percolation
cluster ${{\mathcal C}_\infty}$ of a supercritical Bernoulli
percolation. When no more
than $k$ individuals per site are
allowed, we obtain the $k$-type contact process, which can be derived
from the branching random walk
by killing all particles that are born at a site where already $k$
individuals are present.
We prove that local survival of the branching random
walk on ${{\mathbb Z}^d}$ also implies that for $k$ sufficiently large the
associated $k$-type contact process
survives on ${{\mathcal C}_\infty}$. This implies that the strong
critical parameters
of the branching random walk
on ${{\mathbb Z}^d}$ and on ${{\mathcal C}_\infty}$ coincide and that
their common
value is the
limit of the sequence of strong
critical parameters of the associated $k$-type contact processes.
These results are extended to a family of restrained branching random
walks, that is, branching
random walks where the success of the reproduction trials decreases
with the size of
the population in the target site.
\end{abstract}

%
\begin{keyword}[class=AMS]
\kwd[Primary ]{60K35}
\kwd[; secondary ]{60K37}
\end{keyword}
\begin{keyword}
\kwd{Branching random walk}
\kwd{contact process}
\kwd{percolation cluster}
\kwd{critical parameters}
\kwd{approximation}
\end{keyword}
%
\end{frontmatter}

\section{Introduction}
\label{sec:intro}


The branching random walk is a process which serves as a model for a
population living in a spatially structured environment
[the vertices of a graph $(X,\mathcal E(X))$].
Each individual lives in a vertex, breeds and dies at
random times and each offspring is placed (according to some rule) in
one of the
neighboring vertices.
Since for the branching random walk (BRW in short) there is no bound on
the number
of individuals allowed per site, it is natural to consider a
modification of the
process, namely the multitype contact process, where, for some $k\in\N
$, no more
than $k$ particles per site are allowed (if $k=1$ one gets the usual
contact process).
The multitype contact processes are more realistic models. Indeed,
instead of thinking
of the vertices of the graph as small portions of the ecosystem where
individuals may
pile up indefinitely (like in the BRW), here each vertex can host at
most $k$ individuals.
This is, in particular, true for patchy habitats (each vertex
represents a patch of soil)
or in host-symbionts interactions (each vertex represents a host on top
of which symbionts
may live); see, for instance, \mbox{\cite{cf:BL,cf:BBZ,cf:BLZ}}.

The need for more realistic models also brings random environment into
consideration.
BRWs in random environment has been studied by many authors; see, for instance,
\cite{cf:CMP98,cf:GMPV09,cf:DHMP99,cf:MP00,cf:MP03,cf:M08}.
In many cases the random environment is a random choice of the
reproduction law of the
process (in some cases there is no death).
In our case we put the randomness into the underlying graph.
When choosing $(X,\mathcal E(X))$, ${{\mathbb Z}^d}$ is perhaps the
first choice
that comes
to mind, but other graphs are reasonable options. In particular the BRW
and the contact
process have been studied also on trees \cite
{cf:HuLalley,cf:Ligg1,cf:Ligg2,cf:MadrasSchi,cf:Pem,cf:Stacey03} and on
random graphs as Galton--Watson trees \cite{cf:PemStac1}.
Although ${{\mathbb Z}^d}$ has clear properties of regularity, which
make it a
nice case to study,
random graphs are believed to serve as a better model for real-life
structures and social
networks. It is therefore of interest to investigate the behavior of
stochastic processes on
random graphs, which possibly retain some regularity properties which
make them treatable.
An example is the small world,
which is the space model in \cite{cf:DurrettJung} and \cite{cf:BerBor},
where each vertex has the same number of neighbors.
The percolation cluster of $\mathbb{Z}^d$ given by a supercritical
Bernoulli percolation, which we denote by $\mathcal C_\infty$,
has no such regularity, but has a ``stochastic'' regularity, and its
geometry, if viewed
at a large scale, does not differ too much from ${{\mathbb Z}^d}$ (e.g.,
it is true that, for large $N$,
in many $N$-boxes of ${{\mathbb Z}^d}\cap\mathcal C_\infty$, there
are open
paths crossing the box in each direction
and these paths connect to crossing paths in neighboring boxes; see
\cite{cf:Grimmett}, Chapter~7).
Indeed $\mathcal C_\infty$ shares many stochastic properties with~${{\mathbb Z}^d}$: the simple random walk
is recurrent in $d=1,2$, transient in $d\ge3$ and the transition
probabilities have the same
space--time asymptotics as those of ${{\mathbb Z}^d}$ (with different constants,
\cite{cf:Barlow}); two walkers collide infinitely many often
in $d=1,2$ and finitely many times in $d\ge3$ (see \cite
{cf:BarPerSou}); the voter model clusters
in $d=1,2$ and coexists in $d\ge3$ (see \cite{cf:BLZ}), just to
mention a few facts.

The aim of this paper is to compare the critical parameters of the BRW
and of the multitype contact
process on the infinite percolation cluster $\mathcal C_\infty$ with
the corresponding ones on ${{\mathbb Z}^d}$
(from now on we tacitly assume that the infinite cluster exists almost
surely, i.e., that the underlying
Bernoulli percolation is supercritical).
In order to define these parameters, let us give a formal definition of
the processes involved.

Let $(X,\mathcal E(X))$ be a graph and $\mu\dvtx X\times X\to
[0,+\infty)$
\textit{adapted to the graph}, that is,
$\mu(x,y)>0$ if and only if $(x,y)\in\mathcal E(X)$. 
We require that there exists $K<+\infty$
such that $\zeta(x):=\sum_{y\in X}\mu(x,y)\le
K$ for all $x\in X$. 
Given $\lambda>0$, the $\lambda$-\textit{branching random walk}
($\lambda$-BRW or, when $\lambda$ is not relevant, BRW) is the continuous-time
Markov process $\{\eta_t\}_{t\ge0}$, with configuration space $\N^X$,
where each existing particle at $x$ has an exponential lifespan
of parameter 1 and, during its life, breeds at the arrival times
of a Poisson process of parameter $\lambda\zeta(x)$
and then chooses to send its offspring to $y$
with probability $\mu(x,y)/\zeta(x)$. Thus we associate to $\mu$ a
family of BRWs, indexed by $\lambda$.
With a slight abuse of notation, we will say that $(X,\mu)$ is a BRW
[$\mu(x,y)$ represents the rate at
which existing particles at $x$ breed in $y$].
The BRW is called \textit{irreducible} if and only if the underlying
graph is connected.
Clearly, any BRW on ${{\mathbb Z}^d}$ or ${{\mathcal C}_\infty}$ is
irreducible;
we note that in their graph structure we possibly admit loops; that is,
every vertex might be
a neighbor of itself (thus allowing reproduction from a vertex onto itself).
If $(Y,\mathcal E(Y))$ is a subgraph of $(X,\mathcal E(X))$, we denote by
$\mu_{|Y}(x,y)$ the map $\mu\cdot\ident_{\mathcal E(Y)}$. The
associated BRW $(Y,\mu_{|Y})$,
indexed by $\lambda$, is called the \textit{restriction} of $(X,\mu
)$ to $Y$ and,
to avoid cumbersome notation, we denote it by $(Y,\mu)$.

Two critical parameters are associated to the continuous-time BRW: the
weak (or global) survival
critical parameter $\lambda_w$ and the strong (or local) survival one
$\lambda_s$.
They are defined as
%
\begin{eqnarray}
\label{eq:critical} \lambda_w(x_0)&:=&\inf \bigl\{\lambda>0
\dvtx\Prob^{\delta_{x_0}} (\exists t\dvtx\eta_t=\underline{0} )<1
\bigr\},
\nonumber
\\[-8pt]
\\[-8pt]
\nonumber
\lambda_s(x_0)&:=&\inf \bigl\{\lambda>0\dvtx
\Prob^{\delta_{x_0}} \bigl(\exists\bar t\dvtx\eta_t(x_0)=0,
\forall t\ge\bar t \bigr)<1 \bigr\},
\end{eqnarray}
where $x_0$ is a fixed vertex, $\underline{0}$ is the configuration
with no
particles at all sites and $\Prob^{\delta_{x_0}}$ is the law of the process
which starts with one individual in $x_0$.
Note that these parameters do not depend on the initial state $\ell
\delta_{x_0}$, provided that
$\ell>0$. Moreover, if the BRW is irreducible, then these values
do not depend on the choice of $x_0$ nor on the initial configuration,
provided that this configuration
is nonzero and finite (i.e., it has a strictly positive, finite number
of individuals).
When there is no dependence on $x_0$, we simply write $\lambda_s$ and
$\lambda_w$.
These parameters depend also on $(X,\mu)$: when we need to stress this
dependence,
we write $\lambda_w(x_0,X,\mu)$ and $\lambda_s(x_0,X,\mu)$ [or
simply $\lambda_w(X,\mu)$ and $\lambda_s(X,\mu)$
in the irreducible case].
We refer the reader to Section~\ref{sec:prelim} for how to compute the
explicit value of these
parameters.

Given $(X,\mu)$ and a nonincreasing function $c\dvtx\R^+\to\R^+$, the
\textit{restrained branching random walk}
(briefly, RBRW) $(X,\mu,c)$ is the continuous-time
Markov process $\{\eta_t\}_{t\ge0}$, with configuration space $\N^X$,
where each existing particle at $x$ has an exponential lifespan
of parameter 1 and, during its life, breeds, as the BRW, at rate $c(0)
\zeta(x)$,
then chooses to send its offspring to $y$
with probability $\mu(x,y)/\zeta(x)$, and the reproduction is
successful with
probability $c(\eta(y))/c(0)$. For the RBRW the rate of successful
reproductions
from $x$ to $y$, namely $\mu(x,y)c(\eta(y))$ depends on the
configuration; for a formal introduction to RBRWs, see \cite{cf:BPZ}.

Restrained branching random walks have been introduced in \cite
{cf:BPZ} in order
to provide processes where the natural competition for resources in an
environmental patch
is taken into account (since $c$ is nonincreasing, the more individuals
are present at a vertex,
the more difficult it is for new individuals to be born there). If we
imagine that the vertex
can host at most $N$ individuals, a natural example of $c$ is
represented by the logistic growth
$c_N(i)=\lambda(1-i/N)\ident_{[0,N]}(i)$.
A more general choice where the parameter $N$ represents the strength
of the competition between
individuals (the smaller $N$, the stronger the competition), is given
by fixing a nonincreasing
$\widetilde c$ and letting $c_N(\cdot):=\widetilde c(\cdot/N)$.
The usual BRWs and multitype contact processes can be seen as
particular cases of RBRWs:
if $c\equiv\lambda$, the associated RBRW is the $\lambda$-BRW;
if $c=\lambda\ident_{[0,k-1]}$, we call the corresponding RBRW
$k$-\textit{type contact process}, and we denote it by $\{\eta_t^k\}
_{t\ge0}$.
The critical parameters of the $k$-type contact process are denoted by
$\lambda_s^k$ and $\lambda_w^k$.

The order relations between all these critical values are shown in
Figure~\ref{fig:diagram}; these relations hold
for every $\mu$ adapted to $\Z^d$.

\begin{figure}

\includegraphics{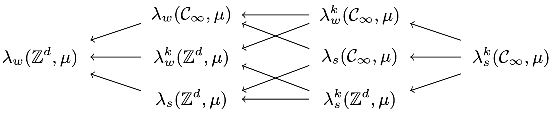}

\caption{Order relation between critical values ($a \to b$ means $a
\ge b$).}
\label{fig:diagram}
\end{figure}

%
%
%
%
%
%

It has already been proven in \cite{cf:BZ3} that if $\mu$ is
quasi-transitive on $X$
(a property of regularity, see Definition~\ref{def:quasit}), then
$\lambda_s^k(X,\mu)\stackrel{k\to\infty}{\longrightarrow} \lambda
_s(X,\mu)$, and,
if $\mu$ is translation invariant on ${{\mathbb Z}^d}$, then
$\lambda_w^k({{\mathbb Z}^d},\mu)\stackrel{k\to\infty
}{\longrightarrow}
\lambda_w({{\mathbb Z}^d},\mu)$.
Analogous results for discrete-time processes can be found in \cite
{cf:Z1}, and
recently some progress has been made for discrete-time BRWs on Cayley
graphs of finitely generated groups; see \cite{cf:Muller14}.

When considering BRWs and multitype contact processes on $\mathcal
C_\infty$, two natural
questions arise. First, we wonder whether the critical parameters of
the BRW
on $\mathcal C_\infty$ can be deduced from the ones of the BRW on
${{\mathbb Z}^d}
$; second, whether
the parameters of the $k$-type contact process converge to the
corresponding ones of the BRW.
Note that even if the BRW $({{\mathbb Z}^d},\mu)$ has good properties of
regularity, like quasi-transitivity,
its restriction to $\mathcal C_\infty$ has none of these properties,
and the aforementioned
questions are not trivial.

Our main result answers both questions regarding $\lambda_s$: for
quasi-transitive BRWs on ${{\mathbb Z}^d}$
the strong critical parameter coincides with the one on $\mathcal
C_\infty$
(this result was actually already in \cite{cf:BZ3}, Theorem~7.1, but
here we provide a different
proof which can be extended to answer the second question). Moreover
the sequence
of the strong critical parameters of $k$-type contact processes
restricted to $\mathcal C_\infty$
converge to the one of the BRW on ${{\mathbb Z}^d}$.
We note that here we consider only continuous-time processes, but
analogous results hold for
discrete-time BRWs as well.

\begin{teo}\label{thm:approxs}
Let $({{\mathbb Z}^d},\mu)$ be a quasi-transitive BRW and $C_\infty
\subseteq\Z
^d$ be the infinite cluster of a supercritical Bernoulli
percolation.
Then:
\begin{longlist}[(1)]
\item[(1)] $\lambda_s(\mathcal C_\infty,\mu
)=\lambda_s({{\mathbb Z}^d},\mu)$
a.s.~with respect to the realization of ${{\mathcal C}_\infty}$;
\item[(2)] $\lim_{k\to\infty} \lambda_s^k(\mathcal C_\infty,\mu
)=\lim_{k\to\infty}
\lambda_s^k({{\mathbb Z}^d},\mu)
=\lambda_s({{\mathbb Z}^d},\mu)
$ a.s.~with respect to the realization of ${{\mathcal C}_\infty}$.
\end{longlist}
\end{teo}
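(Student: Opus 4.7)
The plan is to prove both parts together through a single block-renormalisation argument that respects a uniform bound on per-site occupancy. The inequalities $\lambda_s(\Cinf,\mu)\ge\lambda_s(\Zd,\mu)$ and $\lambda_s^k(\Cinf,\mu)\ge\lambda_s^k(\Zd,\mu)$ follow from the standard graphical coupling that dominates the process on the subgraph $\Cinf$ by the process on $\Zd$. In view of the convergence $\lambda_s^k(\Zd,\mu)\downarrow\lambda_s(\Zd,\mu)$ from \cite{cf:BZ3}, both statements of the theorem reduce to showing that for every $\lambda>\lambda_s(\Zd,\mu)$ there exists $k_0$ such that the $k$-type contact process on $(\Cinf,\mu)$ at parameter $\lambda$ survives locally $\bPr$-a.s.\ for all $k\ge k_0$.

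To establish this, fix $\lambda'\in(\lambda_s(\Zd,\mu),\lambda)$ and, using \cite{cf:BZ3} once more, choose $k_1$ with $\lambda'>\lambda_s^{k_1}(\Zd,\mu)$. The $k_1$-type contact process on $(\Zd,\mu)$ at rate $\lambda'$ then survives locally while automatically respecting the occupancy bound $k_1$: this is the source of the uniform bound that later transfers to $\Cinf$. Standard renormalisation arguments exploiting the quasi-transitivity of $\mu$ now yield a seed event at some box scale $N$, time horizon $T$ and threshold $M$: starting from one particle at any site, the process produces at least $M$ particles at each target vertex of the central and neighbouring $N$-boxes by time $T$ with probability $\ge 1-\eps$, and the target vertices are chosen so that the event can be iterated from them. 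This iteration dominates the $k_1$-type contact process on $(\Zd,\mu)$ from below by a supercritical oriented site percolation on a coarse lattice.

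To transfer the construction to $\Cinf$, invoke the coarse-graining of supercritical Bernoulli percolation (\cite[Ch.~7]{cf:Grimmett}): for $N$ large each $N$-box intersected with $\Cinf$ contains, with probability $\ge 1-\eps'$, a crossing backbone of chemical diameter at most $CN$ whose endpoints merge with the backbones of neighbouring boxes. Each elementary reproduction $x\to y$ of the $\Zd$ seed event can then be simulated by a chain of at most $CN$ consecutive reproductions along an open path in $\Cinf$; the surplus rate $\lambda-\lambda'$ is spent to give each such chain a probability $\ge 1-\eps''$ of completing within $T$. Combining the BRW seed event, the backbone event and the chain-completion events yields a finite-range dependent oriented site percolation on the coarse lattice indexing the $N$-boxes of $\Cinf$, still supercritical for $\eps,\eps',\eps''$ small enough. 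Because the whole simulation uses at most $k_0:=k_1\cdot L(N,d)$ particles per $\Cinf$-site, where $L(N,d)$ bounds the number of simulating chains that meet a given vertex, it is realised by the $k$-type contact process on $\Cinf$ for every $k\ge k_0$, which therefore survives locally. The main obstacle is the balancing of the three scales: $N$ must be large enough for the Grimmett backbones and for a Liggett--Schonmann--Stacey comparison between finite-range dependent and independent oriented percolation, but still small enough that the BRW seed event at rate $\lambda'$ succeeds with probability $\ge 1-\eps$ at that scale.
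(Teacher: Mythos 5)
Your reduction step is sound: given the monotonicity in $k$ and the known convergence $\lambda_s^k(\Zd,\mu)\downarrow\lambda_s(\Zd,\mu)$ from \cite{cf:BZ3}, both parts do reduce to showing that for every $\lambda>\lambda_s(\Zd,\mu)$ the $k$-type contact process on $\Cinf$ at rate $\lambda$ survives locally for $k$ large. However, your route after that point diverges sharply from the paper's and has a genuine gap at its center.

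The paper never goes through a supercritical $k_1$-type contact process on $\Zd$ at all. It identifies, inside $\Cinf$ itself, a bi-infinite chain of boxes $x_j+B(m)$ joined by open paths $\pi_j$ of uniformly bounded length $M$ (Lemma~\ref{lem:geomcluster}, via Pisztora's coarse-graining), runs the \emph{unrestricted} BRW on this subset $\mathcal A\subseteq\Cinf$, shows (Lemmas~\ref{lem:expgrowth}--\ref{lem:probk}) that $\ell$ particles at $x_j$ produce $\ell$ particles at each of $x_{j\pm1}$ by a fixed time $T$ with probability close to $1$, and compares with oriented percolation on $\Z\times\vec\N$. The occupancy bound that gives a finite $k$ is then obtained by truncating the total progeny of the BRW at a fixed level $\bar n$ (chosen so that $\Prob(N_T\le\bar n)>1-\varepsilon$) and using the trivial bound $H\bar n$ on per-site occupancy, where $H$ counts paths of length $\bar n$. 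No process on $\Zd$ is simulated anywhere.

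Your chain-simulation step, by contrast, has problems the paper's construction is designed to avoid. First, simulating each elementary reproduction $x\to y$ of the $\Zd$ seed event by a chain of up to $CN$ reproductions along a $\Cinf$-path distorts the time structure: the $\Zd$ event is a space-time configuration in which later reproductions depend on earlier ones happening at specific times, and a chain of length $CN$ consumes a macroscopic amount of real time during which particles die at rate one. The surplus $\lambda-\lambda'$ is a fixed constant, so the probability that a single chain of length $CN$ completes quickly decays with $N$; the number of elementary reproductions in the seed event grows polynomially in $N$; and the backbone construction forces $N\to\infty$. Nothing in your argument shows these three scalings can be reconciled, and indeed the natural estimates go the wrong way. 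Second, the bound $k_0=k_1\cdot L(N,d)$ is asserted, not derived: a chain in the BRW/contact process is not a single particle sliding along a path but a sequence of births, and the intermediate particles it leaves behind on the path (together with the $k_1$ independent chains from each of the up-to-$k_1$ ancestors) are not obviously controlled by $L(N,d)$ without a separate truncation argument --- which is precisely the content of the paper's $N_T\le\bar n$ step. Third, the ``standard renormalisation argument'' producing a supercritical seed event for the $k_1$-type contact process on $\Zd$ is not a citation you can fall back on in this paper's framework; it is exactly the machinery one would need to build, and the paper builds it only for the BRW restricted to $\mathcal A$, where it is much cheaper (Lemma~\ref{lem:expgrxtoy} and the CLT). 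I would recommend abandoning the detour through a $\Zd$ contact process and instead working, as the paper does, directly with the BRW on the chain of boxes in $\Cinf$, extracting the finite $k$ at the very end from a progeny-truncation bound rather than trying to inherit it from $\Zd$.
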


We observe that the equality
$\lim_{k\to\infty}
\lambda_s^k({{\mathbb Z}^d},\mu)
=\lambda_s({{\mathbb Z}^d},\mu)$ has already been proven in \cite{cf:BZ3},
Theorem~5.1.
The result for the weak critical parameter can be obtained when
$\lambda_w({{\mathbb Z}^d},\mu)=\lambda_s({{\mathbb Z}^d},\mu)$,
which is, for instance,
true when
$\mu$ is quasi-transitive and symmetric; see Section~\ref{sec:prelim}.

\begin{teo}\label{thm:approxw}
Let $({{\mathbb Z}^d},\mu)$ be a quasi-transitive BRW such that
$\lambda_w({{\mathbb Z}^d},\break \mu)=\lambda_s({{\mathbb Z}^d},\mu)$,
and let
$C_\infty\subseteq\Z^d$ be the infinite cluster of a supercritical Bernoulli
percolation.
Then, a.s.~with respect to the realization of ${{\mathcal C}_\infty}$,\break
$\lim_{k\to\infty} \lambda_w^k(\mathcal C_\infty,\mu
)=\lim_{k\to\infty} \lambda_w^k({{\mathbb Z}^d},\mu)
=\lambda_w(\mathcal C_\infty,\mu
)=\lambda_w({{\mathbb Z}^d},\mu)$.
\end{teo}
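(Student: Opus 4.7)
The plan is to derive Theorem~\ref{thm:approxw} as an essentially immediate corollary of Theorem~\ref{thm:approxs}, the order relations displayed in Figure~\ref{fig:diagram}, and the hypothesis $\lambda_w(\Zd,\mu)=\lambda_s(\Zd,\mu)$. No new coupling or percolation argument beyond what is already used for the strong critical parameter should be required.

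First I would record the two chains of inequalities read off Figure~\ref{fig:diagram}. On $\Zd$ one has
\[
\lambda_w(\Zd,\mu)\le \lambda_w^k(\Zd,\mu)\le \lambda_s^k(\Zd,\mu),
\]
and on the percolation cluster
\[
\lambda_w(\Zd,\mu)\le \lambda_w(\Cinf,\mu)\le \lambda_w^k(\Cinf,\mu)\le \lambda_s^k(\Cinf,\mu).
\]
The first inequality in each line reflects the monotonicity $\lambda_w\le\lambda_w^k$ coming from the fact that a $k$-type contact process is stochastically dominated by the unrestricted BRW (kill every birth at a site already carrying $k$ particles), the last uses $\lambda_w\le\lambda_s$, and the inequality $\lambda_w(\Zd,\mu)\le\lambda_w(\Cinf,\mu)$ comes from the trivial graphical coupling that dominates $(\Cinf,\mu)$ by $(\Zd,\mu)$, so global extinction of the latter forces global extinction of the former.

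The second step is to apply Theorem~\ref{thm:approxs}. That theorem, on almost every realization of $\Cinf$, gives
\[
\lim_{k\to\infty}\lambda_s^k(\Zd,\mu)\;=\;\lim_{k\to\infty}\lambda_s^k(\Cinf,\mu)\;=\;\lambda_s(\Zd,\mu),
\]
and the hypothesis collapses $\lambda_s(\Zd,\mu)$ to $\lambda_w(\Zd,\mu)$. Letting $k\to\infty$ in the two chains above then sandwiches every intermediate quantity between $\lambda_w(\Zd,\mu)$ and itself, yielding
\[
\lim_{k\to\infty}\lambda_w^k(\Zd,\mu)=\lambda_w(\Zd,\mu),\qquad \lambda_w(\Cinf,\mu)=\lim_{k\to\infty}\lambda_w^k(\Cinf,\mu)=\lambda_w(\Zd,\mu),
\]
which is exactly the statement of the theorem.

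The substantive work is therefore already done inside Theorem~\ref{thm:approxs}; the only thing to double-check is that each of the comparisons used above remains valid on the non-regular graph $\Cinf$. These are all standard monotone couplings in the graphical construction and do not use quasi-transitivity, so the only genuine ``obstacle'' is bookkeeping: making sure the hypothesis $\lambda_w(\Zd,\mu)=\lambda_s(\Zd,\mu)$ is used solely to transfer the convergence of $\lambda_s^k$ in Theorem~\ref{thm:approxs} over to the weak parameters, which is the exact role it plays in the sandwich.
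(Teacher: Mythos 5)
Your proposal is correct and follows exactly the paper's own argument, which simply states that the theorem ``follows easily from Theorem~\ref{thm:approxs}, the hypothesis $\lambda_s(\Zd,\mu)=\lambda_w(\Zd,\mu)$ and the diagram shown in Figure 1.'' You have merely spelled out the two sandwiching chains of inequalities that make this one-line justification rigorous.
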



The fact that whenever a quasi-transitive BRW on ${{\mathbb Z}^d}$ is locally
supercritical (i.e.,~$\lambda> \lambda_s$),
so are the $k$-type contact processes
restricted to $\mathcal C_\infty$, whenever $k$ is sufficiently large,
also holds
for families of RBRWs, where 
$c_N(\cdot):=c(\cdot/N)$, and $c$ is a given nonnegative
function such that $\lim_{z \to0^+} c(z)=c(0)> \lambda_s({{\mathbb
Z}^d}, \mu)$.

\begin{teo}\label{thm:RBRW}
Let $({{\mathbb Z}^d},\mu)$ be a quasi-transitive BRW and $C_\infty
\subseteq\Z
^d$ be the infinite cluster of a supercritical Bernoulli
percolation.
Let $c$ be a nonnegative, nonincreasing function such that $\lim_{z
\to0^+} c(z)=c(0)> \lambda_s({{\mathbb Z}^d}, \mu)$,
and let $c_N(\cdot):=c(\cdot/N)$.
Consider the RBRWs $({{\mathbb Z}^d},\mu,c_N)$ and $({{\mathcal
C}_\infty},\mu,c_N)$: they both
survive locally
whenever
$N$ is sufficiently large.
\end{teo}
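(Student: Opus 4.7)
The plan is to reduce the claim to Theorem~\ref{thm:approxs}(2) by coupling the RBRW $(Y,\mu,c_N)$, with $Y\in\{\Zd,\Cinf\}$, to a $k$-type contact process on $Y$ whose constant birth parameter is slightly below $c(0)$. The guiding observation is that for any fixed occupancy threshold $k$, the right-continuity of $c$ at $0$ forces $c_N(j)=c(j/N)$ to be arbitrarily close to $c(0)$ uniformly in $0\le j\le k-1$ as $N\to\infty$; thus for $N$ large, the per-site reproduction rates of the RBRW dominate those of a suitable $k$-type contact process.

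Because $c(0)>\lambda_s(\Zd,\mu)$, Theorem~\ref{thm:approxs}(2) lets us pick $k$ large enough so that $\lambda_s^k(\Zd,\mu)\vee\lambda_s^k(\Cinf,\mu)<c(0)$, and then fix $\varepsilon>0$ with $\lambda_s^k(\Zd,\mu)\vee\lambda_s^k(\Cinf,\mu)<c(0)-\varepsilon$. Since $c$ is nonincreasing with $\lim_{z\to 0^+}c(z)=c(0)$, choose $\delta>0$ so that $c(z)>c(0)-\varepsilon$ for every $z\in[0,\delta)$. For every $N>(k-1)/\delta$ we then have $c_N(j)>c(0)-\varepsilon$ for all $0\le j\le k-1$.

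Now couple the RBRW (with configuration denoted $\eta$) and the $k$-type contact process (configuration $\xi$) of parameter $\lambda^\ast:=c(0)-\varepsilon$, both on $Y$ and both starting from $\delta_{x_0}$, via a common graphical construction: place Poisson arrivals of rate $\mu(x,y)c(0)$ on each ordered edge, attach an independent uniform mark $U\in[0,1]$ to each arrival, and match death clocks on paired particles. An induction over events shows $\eta\ge\xi$ pointwise: if a birth arrival from $x$ to $y$ succeeds in $\xi$, then $\xi(x)\ge 1$ (so $\eta(x)\ge 1$), $\xi(y)\le k-1$, and $U\le\lambda^\ast/c(0)$; either $\eta(y)\le k-1$, in which case $c_N(\eta(y))/c(0)>\lambda^\ast/c(0)\ge U$ forces acceptance in the RBRW as well, or $\eta(y)\ge k>\xi(y)$ and the inequality survives even without a birth in $\eta$. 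Deaths preserve domination because each $\xi$-particle is paired with a $\eta$-particle. Since $\lambda^\ast>\lambda_s^k(Y,\mu)$, the $k$-type contact process survives locally on $Y$, and so does the RBRW. The single technical step is this coupling, a routine application of the graphical representation for monotone interacting particle systems; the main conceptual work has already been performed in Theorem~\ref{thm:approxs}(2).
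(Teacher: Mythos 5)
Your proof is correct and follows essentially the same route as the paper: pick $\varepsilon$ and $\delta$ from the right-continuity of $c$, apply Theorem~\ref{thm:approxs}(2) to get a supercritical $k$-type contact process with parameter $c(0)-\varepsilon$, note $c_N\ge(c(0)-\varepsilon)\ident_{[0,k-1]}$ for $N>(k-1)/\delta$, and conclude by coupling. The only cosmetic differences are the order in which you fix $k$ versus $\varepsilon$ (both orders work) and the fact that you spell out the graphical-construction coupling that the paper only cites.
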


As an application, we have that \cite{cf:BLZ}, Theorem~1(2), can be
refined; here is the improved statement.

\begin{cor}\label{cor:BLZ}
Let $\mu(x,x)=\alpha$ and $\mu(x,y)=\beta/2d$ for all $x\in
{{\mathbb Z}^d}$
and $y$ such that $|x-y|=1$,
where $\alpha\ge0$ and $\beta> 0$. Consider the RBRW $({{\mathcal
C}_\infty},\mu,c_N)$ where
$c_N(i)=(1-i/N)\ident_{[0,N]}(i)$. Then:
\begin{longlist}[(1)]
\item[(1)] For all $N > 0$, the process dies out if $\alpha+ \beta
\le1$.
\item[(2)] If $\alpha+ \beta> 1$, then the process survives locally,
provided that $N$ is sufficiently large.
\end{longlist}
\end{cor}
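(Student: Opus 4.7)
The plan is to deduce both items from results already in the paper. For item (1), I would argue by stochastic domination. Since $c_N$ is nonincreasing with $c_N(0)=1$, the successful reproduction rate $\mu(x,y)c_N(\eta(y))$ in the RBRW $(\Cinf,\mu,c_N)$ is pointwise bounded by $\mu(x,y)$, so the RBRW is stochastically dominated by the $\lambda$-BRW $(\Cinf,\mu)$ at intensity $\lambda=1$, which in turn is dominated by the $\lambda$-BRW $(\Zd,\mu)$ at $\lambda=1$. The total-population process of the latter is a continuous-time Galton--Watson process with per-particle birth rate $\alpha+\beta$ and death rate $1$; when $\alpha+\beta\le 1$ it dies out almost surely, and hence so does the RBRW on $\Cinf$.

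For item (2), I would invoke Theorem \ref{thm:RBRW} directly. Define $c(z):=(1-z)\ident_{[0,1]}(z)$; this function is nonnegative, nonincreasing, and satisfies $\lim_{z\to 0^+}c(z)=c(0)=1$ and $c_N(i)=c(i/N)$, so the one-parameter family is of the form required by Theorem \ref{thm:RBRW}. Since $\mu$ is translation invariant, $(\Zd,\mu)$ is quasi-transitive, so the only remaining hypothesis to verify is the strict inequality $c(0)>\lambda_s(\Zd,\mu)$. Under translation invariance on the amenable graph $\Zd$ one has $\lambda_w(\Zd,\mu)=\lambda_s(\Zd,\mu)=1/(\alpha+\beta)$ by the standard spectral-radius computation recalled in Section~\ref{sec:prelim}, so the assumption $\alpha+\beta>1$ gives exactly $\lambda_s(\Zd,\mu)<1=c(0)$, and Theorem \ref{thm:RBRW} then furnishes local survival of $(\Cinf,\mu,c_N)$ for all $N$ sufficiently large.

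In short, the corollary is a direct repackaging of Theorem \ref{thm:RBRW} for the logistic choice $c_N(i)=(1-i/N)\ident_{[0,N]}(i)$: item (1) is a trivial domination, and item (2) reduces, via the identification $\lambda_s(\Zd,\mu)=1/(\alpha+\beta)$, to the hypothesis $c(0)>\lambda_s(\Zd,\mu)$ of the theorem. There is no substantive obstacle beyond this bookkeeping; the real work is entirely concentrated in Theorem \ref{thm:RBRW}.
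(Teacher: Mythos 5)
Your proposal matches the paper's own proof in both parts: item (1) is argued by dominating the total population by a continuous-time branching process with birth rate $\alpha+\beta$ (and death rate $1$), and item (2) is a direct invocation of Theorem~\ref{thm:RBRW} using $\lambda_s(\Zd,\mu)=(\alpha+\beta)^{-1}<1=c(0)$ and translation invariance to get quasi-transitivity. This is essentially the same argument, just spelled out in slightly more detail.
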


To compare with \cite{cf:BLZ}, Theorem~1, we recall that the
extinction phase, that is,
Corollary~\ref{cor:BLZ}(1),
was already stated as \cite{cf:BLZ}, Theorem~1(1); to ensure survival
when $\alpha+\beta>1$
and $N$ is large, \cite{cf:BLZ}, Theorem~1(2), requires that the
parameter of the underlying
Bernoulli percolation is sufficiently close to 1. This request has now
been proven unnecessary, since
it suffices that the Bernoulli percolation is supercritical.

\section{Basic definitions and preliminaries}
\label{sec:prelim}

Explicit characterizations of the critical parameters are possible.
For the strong critical parameter we have $\lambda_s(x)=1/\limsup_{n
\to\infty}\sqrt[n]{\mu^{(n)}(x,x)}$
(see \cite{cf:BZ2}, Theorem 4.1, \cite{cf:BZsl}, Theorem 3.2(1))
where $\mu^{(n)}(x,y)$ are recursively defined by
$\mu^{(n+1)}(x,y)=\break \sum_{w\in X}\mu^{(n)}(x,w)\mu(w,y)$ and $\mu
^{(0)}(x,y)=\delta_{xy}$.
As for $\lambda_w(x)$, it is characterized in terms of solutions of
certain equations
in Banach spaces (see \cite{cf:BZ2}, Theorem~4.2); moreover,
$\lambda_w(x)\ge1/\liminf_{n \to\infty}\sqrt[n]{\sum_{y\in X}\mu
^{(n)}(x,y)}$
\cite{cf:BZ2}, Theorem~4.3, \cite{cf:BZsl}, Theorem~3.2(2).
The last inequality becomes an equality in a certain class of BRWs
which contains
quasi-transitive BRWs (see \cite{cf:BZ2}, Proposition~4.5, \cite{cf:BZsl},
Theorem~3.2(3))
The definition of quasi-transitive BRW is the following.

\begin{defn}\label{def:quasit}
$(X,\mu)$ is a quasi-transitive BRW (or
$\mu$ is a quasi-transitive BRW on $X$) if and only if there exists a
finite set of vertices
$\{x_1,\ldots,x_r\}$ such
that for every $x\in X$ there exists a bijection $f\dvtx X\to X$
such that $f(x_j)=x$ for some $j$ and $\mu$ is $f$-invariant, that is,
$\mu(w,z)=\mu(f(w),f(z))$ for all $w,z$.
\end{defn}

Note that if
$f$ is a bijection such that
$\mu$ is $f$-invariant, then $f$ is an automorphism of the graph
$(X,\mathcal{E}(X))$.
In many cases $\lambda_s$ coincides with $\lambda_w$.
For quasi-transitive and symmetric BRWs [i.e., $\mu(x,y)=\mu(y,x)$
for all $x,y$], it is known
that $\lambda_s=\lambda_w$ is equivalent to amenability (\cite{cf:BZsl},
Theorem~3.2,
which is essentially based on \cite{cf:BZ2} and \cite{cf:Stacey03}, Theorem~2.4).
Amenability is a slow growth condition; see~\cite{cf:Stacey03}, Section~1,
for the definition of amenable graph and \cite{cf:BZsl}, Section~2,
where $m_{xy}$ stands for
$\mu(x,y)$, for the definition of amenable BRW.
It is easy to prove that a quasi-transitive BRW is amenable if and only
if the underlying graph
is amenable. Examples of amenable graphs are ${{\mathbb Z}^d}$
along with its subgraphs. Therefore, every quasi-transitive and
symmetric BRW on ${{\mathbb Z}^d}$ or ${{\mathcal C}_\infty}$ has
$\lambda_s=\lambda_w$.

Another 
sufficient condition is the following, where symmetry is replaced by
reversibility
[i.e.,~the existence of measure $\nu$ on $X$ such that $\nu(x)\mu
(x,y)=\nu(y)\mu(y,x)$ for all $x,y$].
It is a slight generalization of \cite{cf:BZ}, Proposition~2.1 and easily
extends to discrete-time 
BRWs.

\begin{teo}\label{th:subexponentialgrowth}
Let $(X,\mu)$ be a continuous-time BRW, and let $x_0 \in X$.
Suppose that there exists a measure $\nu$ on $X$ and 
$\{c_n\}_{n \in\N}$ such that, for all $n \in\N$
\[
\cases{\nu(y)/\nu(x_0)\le c_n, &\quad $\forall y \in
B(x_0,n)$, \vspace*{2pt}
\cr
\nu(x)\mu(x,y)=\nu(y)\mu(y,x), & \quad$\forall
x,y \in X$,}
\]
where $B(x_0,n)$ is the ball of center $x_0$ and radius $n$.
If $\sqrt[n]{c_n} \to1$ and\break
$\sqrt[n]{|B(x_0,n)|} \to1$ as $n \to\infty$, then
$\lambda_s(x_0)=\lambda_w(x_0)$.
\end{teo}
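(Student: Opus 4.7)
The approach is to use the two spectral characterisations recalled in this section, namely
$$\lambda_s(x_0)=\frac{1}{\limsup_{n\to\infty}\sqrt[n]{\mu^{(n)}(x_0,x_0)}},\qquad \lambda_w(x_0)\ge \frac{1}{\liminf_{n\to\infty}\sqrt[n]{\sum_{y\in X}\mu^{(n)}(x_0,y)}}.$$
Local survival at $x_0$ is a stronger event than global survival, so $\lambda_w(x_0)\le\lambda_s(x_0)$ holds automatically; only the reverse inequality needs to be proven, and by the formulas above it suffices to establish
$$\liminf_{n\to\infty}\sqrt[n]{\sum_{y\in X}\mu^{(n)}(x_0,y)}\;\le\;\limsup_{n\to\infty}\sqrt[n]{\mu^{(n)}(x_0,x_0)}.$$

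The heart of the proof is a weighted Cauchy--Schwarz inequality in which the weights come from the reversibility measure $\nu$. Iterating detailed balance gives $\nu(x_0)\mu^{(n)}(x_0,y)=\nu(y)\mu^{(n)}(y,x_0)$ for every $n$, whence
$$\mu^{(2n)}(x_0,x_0)=\sum_{y\in X}\mu^{(n)}(x_0,y)\mu^{(n)}(y,x_0)=\nu(x_0)\sum_{y\in X}\frac{\mu^{(n)}(x_0,y)^2}{\nu(y)}.$$
Since $\mu(u,v)>0$ only along edges, the function $\mu^{(n)}(x_0,\cdot)$ is supported in $B(x_0,n)$. Cauchy--Schwarz on this ball, together with the hypothesis $\nu(y)\le c_n\nu(x_0)$ valid there, then yields
$$\sum_{y\in X}\mu^{(n)}(x_0,y)\;\le\;\left(\sum_{y\in B(x_0,n)}\frac{\mu^{(n)}(x_0,y)^2}{\nu(y)}\right)^{\!1/2}\!\!\left(\sum_{y\in B(x_0,n)}\nu(y)\right)^{\!1/2}\!\!\le\;\sqrt{\mu^{(2n)}(x_0,x_0)\,c_n\,|B(x_0,n)|}.$$

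Extracting $n$-th roots and using $\sqrt[n]{c_n}\to 1$ and $\sqrt[n]{|B(x_0,n)|}\to 1$ (so that $\sqrt[2n]{c_n|B(x_0,n)|}\to 1$ as well), one obtains
$$\liminf_{n\to\infty}\sqrt[n]{\sum_{y\in X}\mu^{(n)}(x_0,y)}\;\le\;\liminf_{n\to\infty}\sqrt[2n]{\mu^{(2n)}(x_0,x_0)}\;\le\;\limsup_{n\to\infty}\sqrt[n]{\mu^{(n)}(x_0,x_0)},$$
the final inequality being simply the remark that the liminf of any subsequence (here, $\mu^{(m)}(x_0,x_0)^{1/m}$ taken along even $m$) is dominated by the limsup of the full sequence. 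The only mildly delicate point is that Cauchy--Schwarz has forced the index to double, and absorbing this is precisely the purpose of the subexponential hypotheses on $c_n$ and $|B(x_0,n)|$. Nothing specific to continuous time has been used, which is why, as the authors remark, the same argument yields the corresponding discrete-time non-oriented statement.
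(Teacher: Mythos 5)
Your argument is correct and follows essentially the same route as the paper: iterate detailed balance to get $\nu(x_0)\mu^{(n)}(x_0,y)=\nu(y)\mu^{(n)}(y,x_0)$, observe that $\mu^{(n)}(x_0,\cdot)$ is supported in $B(x_0,n)$, apply Cauchy--Schwarz with the reversibility weights to compare $\sum_y\mu^{(n)}(x_0,y)$ with $\mu^{(2n)}(x_0,x_0)$, and let the subexponential hypotheses on $c_n$ and $|B(x_0,n)|$ absorb the error. The only cosmetic difference is in the closing step: the paper invokes Fekete's lemma (supermultiplicativity of $\mu^{(n)}(x_0,x_0)$) to pass from even indices to $1/\lambda_s(x_0)$, whereas you use the slightly more elementary remark that the liminf along the even subsequence is bounded by the limsup of the full sequence; both are valid and interchangeable here.
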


\begin{pf}
If we denote by $[x_0]$ the irreducible class of $x_0$, then it is easy
to show that
$\mu^{(n+1)}(x_0,x_0)=\sum_{w\in[x_0]}\mu^{(n)}(x_0,w)\mu(w,x_0)$.
Note that $\nu(x)\*\mu^{(n)}(x,y)=\nu(y)\mu^{(n)}(y,x)$ for all $x,y
\in X$,
$n \in\N$. In particular since $\nu(x_0)>0$, then $\nu$ is strictly
positive on $[x_0]$, and $[x_0]$ is a final class.
Thus, for all $x,y \in[x_0]$
we have $\mu(x,y)>0$ if and only if $\mu(y,x)>0$. This means that the
subgraph $[x_0]$ is nonoriented; hence the natural distance is
well defined and so is the ball $B(x_0,n)$.
Moreover, by the Cauchy--Schwarz inequality, the supermultiplicative
property of $\mu^{(n+1)}(x_0,x_0)$ and Fekete's lemma,
for all $n \in\N\setminus\{0\}$,
\begin{eqnarray*}
\bigl(1/\lambda_s(x_0) \bigr)^{2n} &\ge&
\mu^{(2n)}(x_0,x_0)=\sum
_{y \in[x_0]}\mu^{(n)}(x_0,y)\mu
^{(n)}(y,x_0)
\\
&=& \sum_{y \in B(x_0,n)} \bigl(\mu^{(n)}(x_0,y)
\bigr)^2\frac{\nu(x_0)}{\nu
(y)}\ge \frac{ (\sum_y \mu^{(n)}(x_0,y) )^2}{c_n|B(x_0,n)|}.
\end{eqnarray*}
Hence
\begin{eqnarray*}
\frac{1}{\lambda_s(x_0)} &\le&\frac{1}{\lambda_w(x_0)} \le\liminf_n
\sqrt[n]{\sum_y \mu^{(n)}(x_0,y)}\\
&=&
\liminf_n\sqrt[2n]{\frac{ (\sum_y \mu^{(n)}(x_0,y)
)^2}{c_n|B(x_0,n)|}} \le\frac{1}{\lambda_s(x_0)}.
\end{eqnarray*}
\upqed\end{pf}

The condition $\sqrt[n]{|B(x,n)|} \to1$ is usually called \textit
{subexponential growth}.
Examples of subexponentially growing graphs are euclidean lattices
${{\mathbb Z}^d}$
or $d$-dimensional combs; see \cite{cf:BZ03} for the definition.
The assumptions of Theorem~\ref{th:subexponentialgrowth} are, for
instance, satisfied, on
subexponentially growing graphs, by
irreducible BRWs with a reversibility measure $\nu$ such that $\nu
(x)\le C$ for all $x\in X$ and
for some $C>0$.

One of the tools in the proof of our results is the fact that if the
BRW survives locally on
a graph $X$; it also survives locally on suitable large subsets
$X_n\subset X$.
This follows from the spatial approximation theorems which
have been proven in a weaker form in \cite{cf:BZ3}, Theorem~3.1, for
continuous-time BRWs
and in a stronger form in \cite{cf:Z1}, Theorem~5.2, for discrete-time
BRWs. The proofs
rely on a lemma on nonnegative matrices and their convergence
parameters, which in its
original form can be found in \cite{cf:Sen}, Theorem~6.8. We restate
here both the
lemma and the approximation theorem. It is worth noting that the
irreducibility assumptions
which were present in \cite{cf:Sen,cf:BZ3,cf:Z1} are here dropped.

Given a nonnegative matrix $M=(m_{xy})_{x,y \in X}$, let
$R(x,y):=1/\break \limsup_{n \to\infty}\sqrt[n]{m^{(n)}(x,y)}$ be the
family, indexed by $x$ and $y$,
of the convergence parameters
[$m^{(n)}(x,y)$ are the entries of the $n$th power matrix $M^n$].
Note that, as recalled earlier in this section, $\lambda_s(x)$
coincides with the convergence
parameter $R(x,x)$ of the matrix $(\mu(x,y))_{x,y \in X}$.
Given a sequence of sets $\{X_n\}_{n \in\N}$ let
$\liminf_{n \to\infty} X_n:= \bigcup_n\bigcap_{k\ge n} X_k$.

\begin{lem}\label{th:genseneta}
Let $\{X_n\}_{n \in\N}$ be a general sequence of subsets of $X$ such that
$\liminf_{n \to\infty} X_n =X$, and suppose that $M=(m_{xy})_{x,y
\in X}$ is a nonnegative matrix.
Consider a sequence
of nonnegative matrices
$M_n=( {m(n)}_{xy})_{x,y \in X_n}$ such that
$0 \le{m(n)}_{xy} \le m_{xy}$ for all $x,y \in X_n$, $n\in\N$ and
$\lim_{n \to\infty} {m(n)}_{xy} =m_{xy}$ for all $x,y \in X$.
Then for all $x_0 \in X$ we have $_nR(x_0,x_0) \rightarrow R(x_0,x_0)$
[$_nR(x_0,x_0)$ being a convergence parameter of the matrix $M_n$].
\end{lem}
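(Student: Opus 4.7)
The easy inequality comes for free from monotonicity: since $0\le {m(n)}_{xy}\le m_{xy}$ and every iterated entry is a polynomial with nonnegative coefficients in the underlying entries, one has ${m(n)}^{(k)}(x_0,x_0)\le m^{(k)}(x_0,x_0)$ for all $k$ and all $n$ with $x_0\in X_n$. Taking $k$-th roots and $\limsup$'s in $k$ gives $1/{}_nR(x_0,x_0)\le 1/R(x_0,x_0)$, i.e.\ $\liminf_n{}_nR(x_0,x_0)\ge R(x_0,x_0)$. The reverse inequality is the substance of the lemma, and is the step that in the classical Seneta statement uses irreducibility. My plan is to replace that tool by the elementary observation that, \emph{regardless} of irreducibility, the diagonal entries are always supermultiplicative: keeping only the $y=x_0$ term in the Chapman--Kolmogorov identity $m^{(k+\ell)}(x_0,x_0)=\sum_{y\in X}m^{(k)}(x_0,y)m^{(\ell)}(y,x_0)$ yields
\[
m^{(k+\ell)}(x_0,x_0)\ge m^{(k)}(x_0,x_0)\,m^{(\ell)}(x_0,x_0),
\]
and the same holds for every $M_n$. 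Via Fekete's lemma this upgrades the definition to $1/R(x_0,x_0)=\sup_{k\ge 1}\sqrt[k]{m^{(k)}(x_0,x_0)}$, and analogously $1/{}_nR(x_0,x_0)\ge\sqrt[k_0]{{m(n)}^{(k_0)}(x_0,x_0)}$ for every $k_0\ge 1$.

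In the nondegenerate case $0<R(x_0,x_0)<\infty$, fix $\epsilon>0$ and choose $k_0$ with $\sqrt[k_0]{m^{(k_0)}(x_0,x_0)}>1/R(x_0,x_0)-\epsilon/2$. Expanding
\[
m^{(k_0)}(x_0,x_0)=\sum_{y_1,\dots,y_{k_0-1}\in X}m(x_0,y_1)m(y_1,y_2)\cdots m(y_{k_0-1},x_0)
\]
and using nonnegativity of every summand, I would pick a finite $F\sbs X$ with $x_0\in F$ such that the same sum restricted to $y_i\in F$ exceeds $(1/R(x_0,x_0)-\epsilon)^{k_0}$. Since $\liminf_n X_n=X$, eventually $F\sbs X_n$, and since ${m(n)}_{xy}\to m_{xy}$ pointwise on the (finite) collection of ordered pairs of vertices in $F$, the partial sum for $M_n$ over $F$ converges to the partial sum for $M$. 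Hence for $n$ large ${m(n)}^{(k_0)}(x_0,x_0)\ge(1/R(x_0,x_0)-\epsilon)^{k_0}$, and the supermultiplicativity bound applied to $M_n$ gives
\[
1/{}_nR(x_0,x_0)\ge\sqrt[k_0]{{m(n)}^{(k_0)}(x_0,x_0)}\ge 1/R(x_0,x_0)-\epsilon.
\]
Letting $\epsilon\downarrow 0$ yields $\limsup_n{}_nR(x_0,x_0)\le R(x_0,x_0)$, which with the first paragraph closes the proof.

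The boundary cases are absorbed by the same scheme: if $R(x_0,x_0)=+\infty$ then $m^{(k)}(x_0,x_0)=0$ for every $k$, so monotonicity forces ${m(n)}^{(k)}(x_0,x_0)=0$ and both parameters equal $+\infty$; if $R(x_0,x_0)=0$ the truncation argument with $1/R(x_0,x_0)-\epsilon$ replaced by an arbitrarily large real shows $1/{}_nR(x_0,x_0)\to\infty$. The only really delicate step is the finite-truncation one, and the observation that lets the argument avoid irreducibility is that the lower bound is needed only at the \emph{single} return entry $m^{(k_0)}(x_0,x_0)$: nonnegativity guarantees that a finite sub-sum captures most of its mass, and pointwise convergence of the entries on a finite set of ordered pairs of vertices is enough to transfer the lower bound to $M_n$ for $n$ large, even though the sets $X_n$ are only eventually covering and the convergence of $M_n$ to $M$ is entrywise rather than uniform on all of $X$.
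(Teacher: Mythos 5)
Your proof is correct, and it takes a genuinely different (and more self-contained) route than the paper does. The paper does not prove the lemma from scratch: it invokes \cite[Theorem 5.2]{cf:Z1}, itself an extension of \cite[Theorem 6.8]{cf:Sen}, and observes that since $R(x_0,x_0)$ only depends on the irreducible class $[x_0]$, the irreducibility hypothesis there can be dispensed with by restricting to $[x_0]$. You instead prove the statement directly. For the direction $\liminf_n {}_nR(x_0,x_0)\ge R(x_0,x_0)$ both arguments are the same trivial monotone comparison. For the reverse direction, your key move is to exploit the supermultiplicativity of the diagonal sequence $m^{(k)}(x_0,x_0)$ -- which holds with no irreducibility whatsoever -- to rewrite $1/R(x_0,x_0)$ as $\sup_k\sqrt[k]{m^{(k)}(x_0,x_0)}$ and then transfer a single well-chosen entry $m^{(k_0)}(x_0,x_0)$ to $M_n$ via a finite path truncation, using nonnegativity of the path sum, the eventual covering $F\subseteq X_n$, and entrywise convergence on the finite set $F$. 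That is exactly the same structural insight the paper alludes to (the diagonal convergence parameter is a ``return'' quantity and cannot see outside $[x_0]$), but implemented as a direct estimate rather than a reduction to the irreducible case. The only small cosmetic point: you invoke Fekete's lemma, which in its standard form assumes a real-valued (not $\{-\infty\}$-valued) sequence; what you actually use is the weaker identity $\limsup_k\sqrt[k]{m^{(k)}(x_0,x_0)}=\sup_k\sqrt[k]{m^{(k)}(x_0,x_0)}$, which follows directly from supermultiplicativity even when some entries vanish. Stating that explicitly would make the argument airtight, but the proof as written is sound.
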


Clearly, if
$M$ is irreducible, then $R(x,y)=R$ does not depend on $x,y \in X$, and
for all $x_0 \in X$ we have $_nR(x_0,x_0) \rightarrow R$.
One can repeat the proof of \cite{cf:Z1}, Theorem~5.2, noting that,
since $R(x_0,x_0)$ depends only on the values of the irreducible class
of $[x_0]$
then $_nR(x_0,x_0) \rightarrow R(x_0,x_0)$ without requiring the whole
matrix $M$ to be irreducible.
The following theorem is the application of Lemma~\ref{th:genseneta}
to the spatial approximation
of continuous-time BRWs [an analogous result holds for discrete-time
BRWs (see \cite{cf:Z1}, Theorem~5.2),
where we can drop the irreducibility assumption].

\begin{teo}\label{th:spatial}
Let $(X,\mu)$ be a continuous-time BRW, and
let us consider a sequence of continuous-time BRWs $\{(X_n, {\mu_n})\}
_{n\in\N}$
such that\break $\liminf_{n \to\infty} X_n = X$.
Let us suppose that
$\mu_n (x,y) \le\mu(x,y)$ for all $x,y\in X_n$, $n\in\N$
and $\mu_n (x,y) \to\mu(x,y)$ as $n \to\infty$ for all $x,y\in X$.
Then, for all $x_0 \in X$, $\lambda_s(x_0,X_n, {\mu_n})\ge\lambda
_s(x_0,X,\mu)$ and
$\lambda_s(x_0,X_n, {\mu_n})
\to\lambda_s(x_0,X,\mu)$ as $n\to\infty$.
\end{teo}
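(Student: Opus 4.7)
The plan is to reduce Theorem~\ref{th:spatial} to Lemma~\ref{th:genseneta} by identifying $\lambda_s$ with the convergence parameter of the appropriate nonnegative matrix. Recall from Section~\ref{sec:prelim} that $\lambda_s(X,\mu) = R(x_0, x_0) = 1/\limsup_n \sqrt[n]{\mu^{(n)}(x_0, x_0)}$ for any fixed $x_0$, and analogously $\lambda_s(X_n, \mu_n) = {}_nR(x_0, x_0)$, once $x_0 \in X_n$; the latter holds for all sufficiently large $n$ since $\liminf_n X_n = X$.

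First I would verify the monotonicity inequality $\lambda_s(X_n, {\mu_n}) \ge \lambda_s(X,\mu)$. Since $X_n \subseteq X$ and $\mu_n(x,y) \le \mu(x,y)$ on $X_n \times X_n$, an easy induction on $k$ (sums of nonnegative terms over a smaller index set, each term dominated) gives $\mu_n^{(k)}(x_0, x_0) \le \mu^{(k)}(x_0, x_0)$ for every $k$. Taking $n$-th roots, then the $\limsup$ in $k$, and inverting yields ${}_nR(x_0, x_0) \ge R(x_0, x_0)$, i.e.\ $\lambda_s(X_n, \mu_n) \ge \lambda_s(X,\mu)$; this also makes the $\liminf$ of the sequence no smaller than $\lambda_s(X,\mu)$.

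Next, for the convergence statement, I would apply Lemma~\ref{th:genseneta} with $M=(\mu(x,y))_{x,y\in X}$ and $M_n = (\mu_n(x,y))_{x,y \in X_n}$. All hypotheses of the lemma are satisfied: $\liminf_n X_n = X$ by assumption, $0 \le \mu_n(x,y) \le \mu(x,y)$ for $x,y \in X_n$ by hypothesis, and $\mu_n(x,y) \to \mu(x,y)$ for every fixed $x,y \in X$, since such a pair eventually belongs to $X_n \times X_n$ and the pointwise convergence hypothesis then applies from that index onward. The lemma therefore delivers ${}_nR(x_0, x_0) \to R(x_0, x_0)$, which is precisely $\lambda_s(X_n, {\mu_n}) \to \lambda_s(X, \mu)$.

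The main delicacy is purely bookkeeping: the matrices $M_n$ live on varying index sets $X_n$, so one must check that ``$\mu_n(x,y) \to \mu(x,y)$ for all $x,y \in X$'' is legitimate as an input to Lemma~\ref{th:genseneta}. This is handled by the $\liminf_n X_n = X$ condition, which guarantees that for each $(x,y) \in X \times X$ the value $\mu_n(x,y)$ is defined from some $n$ on. Note that we do not need to assume $M$ is irreducible: as remarked after Lemma~\ref{th:genseneta}, the conclusion $_nR(x_0, x_0) \to R(x_0, x_0)$ depends only on the irreducible class $[x_0]$, which is all that is needed here. Once these compatibility points are in place, the two statements of the theorem follow immediately.
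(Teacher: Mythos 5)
Your proposal is correct and follows exactly the route the paper intends: the paper presents Theorem~\ref{th:spatial} as a direct application of Lemma~\ref{th:genseneta}, identifying $\lambda_s$ with the convergence parameter $R(x_0,x_0)$ and using monotonicity of the $n$-step rates for the inequality. Your bookkeeping remarks about $\liminf_n X_n = X$ and about irreducibility being unnecessary match the paper's discussion following Lemma~\ref{th:genseneta}.
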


\section{Proofs and applications}
\label{sec:proofs}
Before proving our main results, we need to prove some preparatory lemmas.
The first lemma gives a useful expression for the expected value of the
progeny living at time $t$ at
vertex $y$ of a particle which was at $x$ at time 0. Its proof, which
can be found in \cite{cf:BPZ}, Section~3,
is based on the construction of the
process by means of its generator as done in \cite{cf:LiggSpitz}.
The key to the proof is the fact
that the expected value is the solution of a system of differential equations.
Neither Bertacchi, Posta and Zucca \cite{cf:BPZ} nor Liggett and
Spitzer \cite{cf:LiggSpitz} construct the process in our setting;
nevertheless it is not difficult to adapt their construction to our
case; the interested reader can find the details in Remark~\ref{rem:liggett}.

\begin{lem}\label{lem:expnumber}
For any $\lambda$-BRW on a graph $X$ we have that
\[
\E \bigl(\eta_t(y)|\eta_0=\delta_x
\bigr)=e^{-t}\sum_{n=0}^\infty\mu
^{(n)}(x,y)\frac{(\lambda t)^n}{n!}.
\]
The expected number of descendants of generation $n$ at $y$ at time $t$
(of a particle at $x$ at time $0$) is
\[
e^{-t}\mu^{(n)}(x,y)\frac{(\lambda t)^n}{n!},
\]
and the expected number of descendants of generation $n$ at $\gamma_n$
at time $t$
(of a particle at $\gamma_0$ at time $0$)
along the path $\gamma=(\gamma_0,\ldots,\gamma_n)$ is
\[
e^{-t} \prod_{i=0}^{n-1}\mu(
\gamma_i,\gamma_{i+1})\frac{(\lambda t)^n}{n!}
\]
(in this case only the particles of generation $i+1$ at $\gamma_{i+1}$
which are children of particles of generation $i$ at $\gamma_i$ are
taken into account, for all $i=0,\ldots, n-1$).
\end{lem}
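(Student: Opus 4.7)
The three identities are nested: (3) is the finest; summing (3) over paths of length $n$ from $x$ to $y$ yields (2) by definition of $\mu^{(n)}$; summing (2) over $n\ge 0$ yields (1) (the interchange with expectation is legitimate by Tonelli, all terms being nonnegative). My plan is therefore to prove (3) directly and to read off (2) and (1) as successive aggregations.

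For (3), I would work in the graphical (Harris) construction of the BRW: since each particle at $z$ breeds at total rate $\lambda\zeta(z)$ with target chosen according to $\mu(z,\cdot)/\zeta(z)$, this is equivalent to having, for each ordered pair $(z,w)$, an independent Poisson process of intensity $\lambda\mu(z,w)$ that records the times at which a particle currently at $z$ produces an offspring at $w$, together with independent $\mathrm{Exp}(1)$ lifespans. Fix a path $\gamma=(\gamma_0,\ldots,\gamma_n)$ with $\gamma_0=x$ and $\gamma_n=y$. A generation-$n$ descendant alive at $y$ at time $t$ whose ancestral chain is exactly $\gamma$ corresponds to birth times $0<s_1<\cdots<s_n<t$ at which the $i$-th ancestor (alive at $\gamma_i$) breeds into $\gamma_{i+1}$, together with the terminal particle being alive at time $t$. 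Independence of the Poisson clocks and lifespans gives, with $s_0:=0$, the infinitesimal rate
\begin{equation*}
\prod_{i=0}^{n-1}\bigl(e^{-(s_{i+1}-s_i)}\,\lambda\mu(\gamma_i,\gamma_{i+1})\bigr)\cdot e^{-(t-s_n)}\,ds_1\cdots ds_n \;=\; e^{-t}\lambda^n\prod_{i=0}^{n-1}\mu(\gamma_i,\gamma_{i+1})\,ds_1\cdots ds_n,
\end{equation*}
and integrating over the simplex $\{0<s_1<\cdots<s_n<t\}$ (volume $t^n/n!$) produces the claimed $e^{-t}\prod_{i=0}^{n-1}\mu(\gamma_i,\gamma_{i+1})(\lambda t)^n/n!$.

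The main obstacle is justifying rigorously that the expected number of generation-$n$ descendants at $y$ really decomposes as the integral over genealogies of the above density: on a sample path the indicator of ``a gen-$n$ descendant exists with history $\gamma$ and birth times in the given infinitesimal cells'' requires careful bookkeeping of the independent Poisson events along the ancestral chain and of the nonkilling of parents at breeding times. This is the content of the graphical construction, which in the reference \cite{cf:BPZ} is made rigorous via the generator formalism of \cite{cf:LiggSpitz}. An alternative that sidesteps the genealogical bookkeeping is the ODE route: applying the BRW generator to $f_y(\eta):=\eta(y)$ gives $\mathcal{L}f_y(\eta)=-\eta(y)+\lambda\sum_z\eta(z)\mu(z,y)$, so $u(t,y):=\E(\eta_t(y)\mid\eta_0=\delta_x)$ satisfies
\begin{equation*}
\tfrac{d}{dt}u(t,y)=-u(t,y)+\lambda\sum_{z}u(t,z)\mu(z,y),\qquad u(0,\cdot)=\delta_x,
\end{equation*}
whose explicit solution (by expansion of $e^{t(-I+\lambda\mu)}$) is exactly (1); (2) and (3) then follow by the same ODE refined to track the generation index and the ordered path respectively. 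The delicate step in this route is the interchange of generator and expectation for the unbounded observable $f_y$, for which the standing hypothesis $\zeta\le K$ and a monotone approximation of $f_y$ by bounded cylinder functions are the standard tools.
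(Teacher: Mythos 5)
Your proposal is correct, but your primary route is genuinely different from the one the paper actually invokes. The paper does not prove the lemma in-text: it cites \cite[Section 3]{cf:BPZ} and notes the proof there is based on the generator construction of \cite{cf:LiggSpitz} and on recognizing $u(t,y):=\E(\eta_t(y)\mid\eta_0=\delta_x)$ as the solution of the linear ODE system $\tfrac{d}{dt}u(t,\cdot)=u(t,\cdot)(-I+\lambda M)$. That is exactly your ``alternative'' route, and your computation of the generator on $f_y(\eta)=\eta(y)$, the identification of the solution as $e^{-t}e^{\lambda tM}$, and the resulting series are correct. Your primary route --- the genealogical (graphical) decomposition with the telescoping product $\prod_{i=0}^{n-1}e^{-(s_{i+1}-s_i)}\cdot e^{-(t-s_n)}=e^{-t}$ and the simplex volume $t^n/n!$ --- is also correct, and it is arguably the more informative one here: it proves the path-level identity (3) directly (which is the form actually used later in Lemma~\ref{lem:expgrxtoy}), then aggregates to (2) by summing over paths (by definition of $\mu^{(n)}$) and to (1) by summing over $n$ (Tonelli), whereas the ODE route yields (1) cleanly but requires an extra refinement (e.g.\ a two-variable generating function in the generation index, or tracking genealogies anyway) to isolate (2) and (3). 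The trade-off you flagged is accurately located: the graphical route requires care in justifying the independence and measurability bookkeeping along ancestral chains, while the ODE route requires care in interchanging the generator with the expectation for the unbounded linear observable $f_y$; the standing bound $\zeta\le K$ controls both. Either route is a valid proof; the paper's is the second.
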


The following lemma shows that whenever a BRW on ${{\mathbb Z}^d}$ survives
locally [i.e.,
$\lambda>\lambda_s({{\mathbb Z}^d},\mu)$], it also survives locally if
restricted to boxes of sufficiently large radius.
We denote by $B(m)=[-m,m]^d\cap{{\mathbb Z}^d}$ the box centered at 0
and by
$x+B(m)$ its translate centered at $x$.

\begin{lem}\label{lem:superBm}
Let $\mu$ be a BRW on ${{\mathbb Z}^d}$. Then for all $\lambda
>\lambda_s({{\mathbb Z}^d},\mu)$ and for all $x\in{{\mathbb Z}^d}$, there
exists $m(x)\in\N$ such that for all $m \ge m(x)$, $\lambda>\lambda
_s(x+B(m),\mu)$.
Moreover, if $\mu$ is quasi-transitive, then there exists $m_0$ such
that for all $m \ge m_0$,
$\lambda>\sup_{x\in{{\mathbb Z}^d}}\lambda_s(x+B(m),\mu)$.
\end{lem}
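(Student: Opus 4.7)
The plan is to derive the pointwise statement directly from the spatial approximation theorem (Theorem~\ref{th:spatial}), and then to use the finite orbit structure from quasi-transitivity to upgrade to uniformity in $x$.

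For the first claim, fix $x\in\Zd$ and consider the increasing sequence of subgraphs $X_n := x + B(n)$ with $\mu_n := \mu|_{X_n}$. Every finite subset of $\Zd$ lies in $x+B(n)$ for $n$ sufficiently large, so $\liminf_n X_n = \Zd$; moreover $0 \le \mu_n(y,z) \le \mu(y,z)$ and, for each fixed pair $(y,z)$, one has $\mu_n(y,z) = \mu(y,z)$ as soon as both $y$ and $z$ lie in $X_n$. Theorem~\ref{th:spatial} therefore applies and yields $\lambda_s(x+B(n),\mu) \to \lambda_s(\Zd,\mu)$. Since $\lambda > \lambda_s(\Zd,\mu)$, there exists $m(x) \in \N$ with $\lambda_s(x+B(m(x)),\mu) < \lambda$, and the standard monotonicity of $\lambda_s$ under subgraph inclusion ($Y \subset Y' \Rightarrow \lambda_s(Y',\mu) \le \lambda_s(Y,\mu)$, which follows at once from the characterization $\lambda_s = 1/\limsup_n \sqrt[n]{\mu^{(n)}(x_0,x_0)}$ recalled in Section~\ref{sec:prelim}) extends the inequality to every $m \ge m(x)$.

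For the uniform statement, let $\{x_1,\dots,x_r\}$ be the orbit representatives provided by Definition~\ref{def:quasit} and apply the pointwise claim to each $x_j$ to obtain $m_j \in \N$ with $\lambda_s(x_j+B(m_j),\mu) < \lambda$. Given an arbitrary $x \in \Zd$, let $f$ be a $\mu$-invariant bijection with $f(x_j)=x$ for some $j$. The subgraphs $(x_j+B(m_j),\mu)$ and $(f(x_j+B(m_j)),\mu)$ are isomorphic as weighted graphs via $f$, hence share the same strong critical parameter. If $m_0 \ge \max_j m_j$ is chosen large enough that $f(x_j+B(m_j)) \subset x+B(m_0)$ uniformly in $x$ and in the admissible $f$, then monotonicity delivers
\[
\lambda_s(x+B(m),\mu) \;\le\; \lambda_s(f(x_j+B(m_j)),\mu) \;=\; \lambda_s(x_j+B(m_j),\mu) \;<\; \lambda
\]
for every $m \ge m_0$, which is the desired uniform bound.

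The main delicate point is producing this uniform $m_0$. The bijections $f$ witnessing quasi-transitivity are only required to preserve the $\mu$-graph structure $(\Zd,\mathcal E(\Zd),\mu)$, not the Euclidean metric on $\Zd$, so a priori the transported set $f(x_j+B(m_j))$ need not sit in any fixed translate of $B(m_j)$. The way around this is to pass through the $\mu$-graph metric: since $f$ is an automorphism of $(\Zd,\mathcal E(\Zd))$, the image $f(x_j+B(m_j))$ is contained in the $\mu$-graph ball around $x$ whose radius equals the $\mu$-graph diameter of $x_j+B(m_j)$, a finite quantity depending only on $j$ and $m_j$. For a quasi-transitive BRW on $\Zd$ the $\mu$-graph metric and the Euclidean metric are comparable uniformly in the base point (the translation subgroup preserving $\mu$ has finite index in $\Zd$, so the local $\mu$-structure has finite range up to finitely many types), so $\mu$-graph balls of bounded radius embed into Euclidean boxes of bounded radius centered at the same point, producing the single $m_0$ that works for every $x$.
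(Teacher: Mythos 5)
Your proof takes essentially the same route as the paper's: Theorem~\ref{th:spatial} applied to the boxes $x+B(n)$ gives the pointwise claim, and the orbit representatives from quasi-transitivity upgrade it to a uniform $m_0$. Where you diverge is in how explicitly the uniformity step is handled. The paper simply writes $\lambda_s(x+B(m),\mu)=\lambda_s(f(x+B(m)),\mu)=\lambda_s(x_j+B(m),\mu)$ (with $f(x_j+B(m))$ surely intended) and takes $m_0=\max_i m_i$, relying tacitly on the fact that every automorphism of the nearest-neighbour lattice $(\Zd,\mathcal E(\Zd))$ is an affine Euclidean isometry --- a translation composed with a signed coordinate permutation --- so that $f(x_j+B(m))=x+B(m)$ exactly, with no enlargement of the box. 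You are right to flag that a priori $f$ is only a graph automorphism, and your graph-metric detour manufactures a possibly larger but still uniform $m_0$, which is a valid and more robust way to close the gap. The one imprecision is your parenthetical justification: ``the translation subgroup preserving $\mu$ has finite index in $\Zd$'' does not by itself yield comparability of the $\mu$-graph metric with the Euclidean metric --- a translation-invariant $\mu$ with unbounded support has the full translation group yet the two metrics are not comparable. What actually powers both your argument and the paper's is the bounded (nearest-neighbour) support of $\mu$, which the paper assumes implicitly when it asserts that every BRW on $\Zd$ is irreducible; under that assumption the graph metric is the $\ell^1$ distance, comparability is immediate, and in fact $f$ is an honest isometry, so the detour, though correct, is not strictly necessary.
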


\begin{pf}
Let $X=\mathbb{Z}^d$,
$X_n:=(x+B(n))$ and $\mu_n:=\mu\cdot\ident_{X_n \times X_n}$.
By Theorem~\ref{th:spatial} there exists $m$ such that $\lambda
>\lambda_s(X_n,\mu_n)$ for all $n\ge m$.

If $\mu$ is quasi-transitive, there exists a finite set of vertices $\{
x_1,\ldots,x_r\}$ as in
Definition~\ref{def:quasit}.
It is clear that $\lambda_s(A,\mu)=\lambda_s(f(A),\mu)$ for all
$A\subset{{\mathbb Z}^d}$ and for every automorphism $f$
such that $\mu$ is $f$-invariant.
Given $\lambda>\lambda_s({{\mathbb Z}^d},\mu)$, for every $i$ there
exists $m_i$ such that $\lambda>\lambda_s(x_i+B(m_i),\mu)$.
Take $m \ge m_0:=\max_{i=1,\ldots,r}m_i$: by monotonicity
$\lambda_s(x_i+B(m_i),\mu)\ge\lambda_s(x_i+B(m),\mu)$ for all $i$. Thus
$\lambda>\max_{i=1,\ldots,r}\lambda_s(x_i+B(m),\mu)$.
Let $x\in{{\mathbb Z}^d}$ and $f$ as in Definition~\ref{def:quasit}
such that $f(x_j)=x$ for some $j$. Then $\lambda_s(x+B(m),\mu
)=\lambda_s(f(x+B(m)),\mu)=\lambda_s(x_j+B(m),\mu)$ and
$\max_{i=1,\ldots,r}\lambda_s(x_i+B(m),\mu)=\sup_{x\in{{\mathbb Z}^d}
}\lambda_s(x+B(m),\mu)$.
\end{pf}

The following lemma states that for any $\lambda$-BRW on a graph $X$,
with $\lambda>\lambda_s(x)$ 
the expected value of the number of particles in a given site, grows
exponentially in time.

\begin{lem}\label{lem:expgrowth}
Let $\mu$ be a BRW on a graph $X$, $x\in X$ and $\lambda>\lambda
_s(x)$. 
Let $\{\eta_t\}_{t \ge0}$ be the associated $\lambda$-BRW.
Then there exists $\varepsilon=\varepsilon(x,X)$, $C=C(x,X)$ such that
%
\begin{equation}
\label{eq:expgrowth} \E \bigl(\eta_t(x)|\eta_0=
\delta_x \bigr)\ge Ce^{\varepsilon t} \qquad\forall t\ge0.
\end{equation}
\end{lem}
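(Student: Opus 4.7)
The plan is to feed the explicit power series from Lemma~\ref{lem:expnumber} (with $y=x$) into the characterization $\lambda_s(x)=1/\limsup_n\sqrt[n]{\mu^{(n)}(x,x)}$ recalled at the beginning of Section~\ref{sec:prelim}, and extract the exponential lower bound from a single arithmetic-progression subseries.

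First I would write
$$\E(\eta_t(x)\mid\eta_0=\delta_x)=e^{-t}\sum_{n=0}^\infty\mu^{(n)}(x,x)\frac{(\lambda t)^n}{n!},$$
and note that $\lambda>\lambda_s(x)$ says $\lambda\rho>1$ with $\rho:=1/\lambda_s(x)=\limsup_n\sqrt[n]{\mu^{(n)}(x,x)}$. I would fix $s\in(1/\lambda,\rho)$, so that $\lambda s>1$, and use the $\limsup$ to pick $n_0\ge 1$ with $\mu^{(n_0)}(x,x)\ge s^{n_0}$. The supermultiplicativity of $n\mapsto\mu^{(n)}(x,x)$ (from $\mu^{(n+m)}(x,x)\ge \mu^{(n)}(x,x)\mu^{(m)}(x,x)$, obtained by restricting Chapman--Kolmogorov to paths passing through $x$) then gives $\mu^{(kn_0)}(x,x)\ge s^{kn_0}$ for every $k\ge 0$. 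Retaining only the multiples of $n_0$ in the series leaves
$$\E(\eta_t(x)\mid\eta_0=\delta_x)\ge e^{-t}\sum_{k=0}^\infty\frac{(\lambda s t)^{kn_0}}{(kn_0)!}.$$

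The key technical step, and the only real obstacle, is bounding this subseries from below. I would invoke the roots-of-unity filter
$$\sum_{k=0}^\infty\frac{u^{kn_0}}{(kn_0)!}=\frac{1}{n_0}\sum_{j=0}^{n_0-1}\exp\!\bigl(u\,e^{2\pi ij/n_0}\bigr),$$
whose real part is at least $n_0^{-1}\bigl[e^u-(n_0-1)e^{u\cos(2\pi/n_0)}\bigr]$ for $u>0$; since $\cos(2\pi/n_0)<1$ (the case $n_0=1$ is immediate), there is a threshold $u_0=u_0(n_0)$ above which this quantity is at least $e^u/(2n_0)$. This is where care is required, because the naive hope $\mu^{(n)}(x,x)\ge s^n$ for \emph{every} $n$ can fail—when the return times to $x$ have a period $n_0>1$ one has $\mu^{(n)}(x,x)=0$ off a strict arithmetic progression, and it is exactly the filter that isolates the relevant residue class.

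Setting $\varepsilon:=\lambda s-1>0$ and $t_0:=u_0/(\lambda s)$, the preceding bounds give $\E(\eta_t(x)\mid\eta_0=\delta_x)\ge e^{\varepsilon t}/(2n_0)$ for every $t\ge t_0$. For $t\in[0,t_0]$ the single $n=0$ term already yields the trivial bound $\E(\eta_t(x)\mid\eta_0=\delta_x)\ge e^{-t}\ge e^{-t_0}$, which dominates $Ce^{\varepsilon t}$ as soon as $C\le e^{-(1+\varepsilon)t_0}$. Choosing $C:=\min\{1/(2n_0),\,e^{-(1+\varepsilon)t_0}\}$ unifies the two regimes and produces \eqref{eq:expgrowth}, with constants $\varepsilon$ and $C$ depending only on $x$ and $X$ through $\lambda$, $\rho$ and $n_0$.
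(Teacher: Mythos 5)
Your proposal is correct. You share the paper's skeleton---expand via Lemma~\ref{lem:expnumber}, use the $\limsup$ characterization of $\lambda_s(x)$ together with supermultiplicativity of $\mu^{(n)}(x,x)$ to get $\mu^{(kn_0)}(x,x)\ge s^{kn_0}$ along an arithmetic progression, and then bound the resulting lacunary exponential subseries $\sum_k u^{kn_0}/(kn_0)!$ from below---but you handle that last, crucial step by a genuinely different device. The paper compares each retained term with the block of $n_0$ consecutive terms it heads, via the elementary estimate
\[
 \frac{(\bar\lambda t)^{n_0r}}{(n_0r)!}\ge \frac{\bar\lambda t-1}{(\bar\lambda t)^{n_0}-1}\sum_{i=0}^{n_0-1}\frac{(\bar\lambda t)^{n_0r+i}}{(n_0r+i)!}\,,
\]
sums over $r$ to reconstruct $e^{\bar\lambda t}$ up to a polynomially decaying prefactor, and absorbs that prefactor into the exponent by shrinking $\varepsilon_1$ to $\varepsilon_1/2$; the interval $[0,t_1]$ is then handled by continuity of $t\mapsto\E(\eta_t(x)\mid\eta_0=\delta_x)$. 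You instead invoke the roots-of-unity filter to get the closed form $\tfrac{1}{n_0}\sum_{j=0}^{n_0-1}\exp(u\,e^{2\pi ij/n_0})$, control the $j\ne0$ terms by $e^{u\cos(2\pi/n_0)}$, and keep the full exponent $\varepsilon=\lambda s-1$ at the cost of a constant prefactor $1/(2n_0)$, with the small-$t$ regime handled by the trivial $n=0$ term. Both are sound; your route is a bit slicker (exact identity, no loss in the exponential rate, no appeal to continuity), while the paper's is more elementary (no complex exponentials). You are also right that the periodicity concern ($\mu^{(n)}(x,x)=0$ off a progression) is precisely what forces restricting to multiples of $n_0$; the paper's argument respects this as well, but your remark makes the necessity explicit.
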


\begin{pf}
We follow the idea in the proof of \cite{cf:BZ3}, Lemma~5.1.
We prove~\eqref{eq:expgrowth} for all $t\ge t_1$ for some $t_1$; the assertion
then follows by replacing $C$ with $\min(C,C_1)$, where $C_1=\min_{t
\in[0,t_1]} e^{-\varepsilon t}
\E(\eta_t (x)|\eta_0=\delta_x)$ which exists and it is strictly
positive by continuity
[since $t \mapsto\E(\eta_t(x)|\eta_0=\delta_x)$ is a solution of a
differential equation].

Since $\lambda>\lambda_s(x)$, then $\lambda\sqrt[n]{\mu
^{(n)}(x,x)}>1$ for some $n$.
Therefore there exist $n_0 \ge1$ and $\varepsilon_1>0$ such that
$ \mu^{(n_0)}(x,x)> (\frac{1+\varepsilon_1}{\lambda} )^{n_0}$.
By the supermultiplicativity of the sequence $\mu^{(n)}(x,x)$, for all
$r\in\N$,
\[
\mu^{(n_0r)}(x,x)\ge \biggl(\frac{1+\varepsilon_1}{\lambda} \biggr)^{n_0r}.
\]
Recalling Lemma~\ref{lem:expnumber}, we get
\[
\E \bigl(\eta_t(x)|\eta_0=\delta_x \bigr)
\ge e^{-t}\sum_{r\ge0}\frac
{((1+\varepsilon_1)t)^{n_0r}}{(n_0r)!}.
\]
Let $\bar\lambda:=1+\varepsilon_1$. We can write a lower bound for
the summands in the
previous series:
\[
\frac{(\bar\lambda t)^{n_0r}}{(n_0r)!}\ge\frac{\bar\lambda
t-1}{(\bar\lambda t)^{n_0}-1}\cdot \biggl\{\frac{(\bar\lambda t)^{n_0r}}{(n_0r)!}+
\frac{(\bar\lambda
t)^{n_0r+1}}{(n_0r+1)!}+\cdots+ \frac{(\bar\lambda t)^{n_0(r+1)-1}}{(n_0(r+1)-1)!} \biggr\},
\]
whence, for all $t\ge t_1$ and for some $t_1>0$, the following holds:
\[
\E \bigl(\eta_t(x)|\eta_0=\delta_x \bigr)
\ge e^{-t}\cdot\frac{\bar\lambda
t-1}{(\bar\lambda t)^{n_0}-1}\cdot e^{\bar\lambda t} \ge
\frac{\bar\lambda t-1}{(\bar\lambda t)^{n_0}-1}\cdot e^{\varepsilon_1t}\ge e^{\varepsilon_1t/2}.
\]
\upqed\end{pf}

The following lemma states that, for the BRW on ${{\mathbb Z}^d}$,
given two
vertices $x$ and $y$
(also at a large distance), the expected progeny at $y$ of a particle
at $x$, can be made
arbitrarily large, after a sufficiently large time, even if the process
is restricted
to a large box centered at $x$ plus a fixed path from $x$ to $y$; see
Figure~\ref{fig:xtoy}. The idea of the proof
is that the BRW can stay inside the box until the expected number of
particles at $x$
is large, and then move along the path toward $y$.

\begin{figure}

\includegraphics{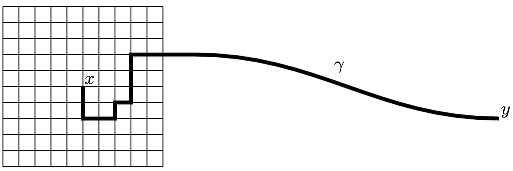}

\caption{The portion of ${{\mathbb Z}^d}$ where we restrict the BRW.}
\label{fig:xtoy}
\end{figure}

%
%
%
%

\begin{lem}\label{lem:expgrxtoy}
Let $\mu$ be a BRW on ${{\mathbb Z}^d}$, $x\in{{\mathbb Z}^d}$,
$\lambda>\lambda_s({{\mathbb Z}^d},\mu)$.
Fix $M,\delta>0$, and choose $m$ such that
$\lambda>\lambda_s(x+B(m),\mu)$. Then there exists $T=T(x,m,M,\delta
)$ such that
%
\begin{equation}
\label{eq:xtoy} \E \bigl(\widetilde\eta_t(y)|\widetilde
\eta_0=\delta_x \bigr)\ge1+\delta,
\end{equation}
for all $t\ge T$, $\gamma$ path of length $l\le M$ with $\gamma_0=x$,
$\gamma_l=y$,
where $\{\widetilde\eta_t\}_{t \ge o}$ is the BRW restricted to
$(x+B(m))\cup\gamma$.
Moreover, if $\mu$ is quasi-transitive, we can choose $m$ and $T$
independent of $x$
such that \eqref{eq:xtoy} holds for all $x\in{{\mathbb Z}^d}$.
\end{lem}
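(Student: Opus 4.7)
The plan is to execute a two-phase strategy: first let the process accumulate an exponentially large expected population at $x$ using only edges inside the box $x+B(m)$, then propagate from $x$ to $y$ along $\gamma$ during a short additional window of time $\tau_0$. For the first phase, since $(x+B(m))\cup\gamma\supset x+B(m)$, the restricted process $\widetilde\eta$ dominates the further restriction $\widetilde\eta^B$ to the box alone; because $\lambda>\lambda_s(x+B(m),\mu)$ by hypothesis, Lemma~\ref{lem:expgrowth} applied to $\widetilde\eta^B$ yields constants $C,\varepsilon>0$ with
\[
\E(\widetilde\eta_s(x)\mid\widetilde\eta_0=\delta_x)\;\ge\;\E(\widetilde\eta^B_s(x)\mid\widetilde\eta^B_0=\delta_x)\;\ge\;Ce^{\varepsilon s}\qquad \forall\, s\ge 0.
\]

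For the second phase, fix $\tau_0>0$ once and for all. By the branching/Markov property at time $s$, keeping in the mean-semigroup identity only the contribution of the particles sitting at $x$,
\[
\E(\widetilde\eta_{s+\tau_0}(y)\mid\widetilde\eta_0=\delta_x)\;\ge\;\E(\widetilde\eta_s(x)\mid\widetilde\eta_0=\delta_x)\cdot\E(\widetilde\eta_{\tau_0}(y)\mid\widetilde\eta_0=\delta_x),
\]
and the last factor is lower bounded, via the path formula in Lemma~\ref{lem:expnumber}, by the contribution of the descendants of generation $l$ that travel exactly along $\gamma$, namely $e^{-\tau_0}\prod_{i=0}^{l-1}\mu(\gamma_i,\gamma_{i+1})(\lambda\tau_0)^l/l!$. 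Since $\Zd$ is locally finite and $\mu$ is strictly positive on every edge, the minimum of this quantity over the finitely many pairs $(l,\gamma)$ with $l\in\{1,\dots,M\}$ and $\gamma_0=x$ is a strictly positive constant $A=A(x,M,\tau_0)$. Combining with the first phase yields $\E(\widetilde\eta_{s+\tau_0}(y)\mid\widetilde\eta_0=\delta_x)\ge CAe^{\varepsilon s}$ uniformly over the admissible $\gamma$'s, and choosing $T$ so that $CAe^{\varepsilon(T-\tau_0)}\ge 1+\delta$ produces \eqref{eq:xtoy} for every $t\ge T$.

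For the quasi-transitive statement, Lemma~\ref{lem:superBm} already supplies an $m_0$ independent of $x$. Inspecting the proof of Lemma~\ref{lem:expgrowth}, the parameters $n_0$ and $\varepsilon_1$, and hence $C$ and $\varepsilon$, depend only on the orbit of $x$ under the invariance group of $\mu$, of which there are only finitely many; similarly $\mu$ takes only finitely many positive values along edges, so $A$ admits a lower bound uniform in $x$. Hence both $m$ and $T$ can be chosen independently of $x$. The main obstacle I expect is precisely this uniformity in the path $\gamma$ (and later in $x$): each phase is easy pointwise, but the polynomial factor $\prod_i\mu(\gamma_i,\gamma_{i+1})$ must be bounded below uniformly, which in the pointwise setting uses local finiteness of $\Zd$ and in the quasi-transitive setting rests on the finite-orbit structure rather than on any compactness argument.
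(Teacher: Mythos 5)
Your proposal follows essentially the same two-phase argument as the paper's proof: grow the expected population at $x$ exponentially inside $x+B(m)$ via Lemma~\ref{lem:expgrowth}, then push mass to $y$ along $\gamma$ over a fixed time window using the path formula in Lemma~\ref{lem:expnumber}, and bound the path contribution below by a positive constant that is uniform over the finitely many paths of length $\le M$ from $x$. The treatment of the quasi-transitive case (finitely many orbits $\Rightarrow$ finitely many relevant constants, take the maximum) is also the paper's argument.
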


\begin{pf}
Fix $t_2>0$. We use the Markov property of the BRW (and the
superimposition with respect to the initial
condition) and apply Lemma~\ref{lem:expnumber}
\begin{eqnarray*}
\E \bigl(\widetilde\eta_{t_1+t_2}(y)|\widetilde\eta_0=
\delta_x \bigr)&\ge& \E \bigl(\widetilde\eta_{t_1}(x)|
\widetilde \eta_0=\delta_x \bigr)\cdot e^{-t_2}
\prod_{i=0}^{l-1}\mu(\gamma_i,
\gamma_{i+1})\frac{(\lambda
t_2)^l}{l!}
\\
& \ge& \E \bigl(\widetilde\eta_{t_1}(x)|\widetilde\eta_0=
\delta_x \bigr)\cdot e^{-t_2}\frac{(\lambda t_2\alpha)^l}{l!}\ge\E \bigl(
\widetilde\eta_{t_1}(x) |\widetilde\eta_0=
\delta_x \bigr)\cdot\widetilde\varepsilon,
\end{eqnarray*}
where $0<\alpha=\alpha(x,M)=\min \{\mu(\gamma^\prime
_i,\gamma^\prime_{i+1})\dvtx i=0,\ldots, l^\prime-1, \gamma
^\prime$ path of length $ 
l^\prime\le M, \gamma^\prime_0=x \}$ and $0<\widetilde
\varepsilon=\widetilde\varepsilon(x,t_2,m,M)=
\min \{e^{-t_2}(\lambda t_2\alpha)^l/l!\dvtx l\le M \}$.\break
Since $\widetilde\eta$ restricted to $x+B(m)$ survives locally, by
Lemma~\ref{lem:expgrowth},
\[
\E \bigl(\widetilde\eta_{t_1+t_2}(y)|\widetilde\eta_0=
\delta_x \bigr)\ge Ce^{\varepsilon t_1}\cdot\widetilde\varepsilon \ge1+
\delta,
\]
for all sufficiently large $t_1$ depending on $x$, $m$, $M$ and $\delta
$. Fix $t_1$ and define $T(x, m, M, \delta):= t_1+t_2$.

If $\mu$ is quasi-transitive, take $\{x_1,\ldots,x_r\}$ and $m_i$ as
in the proof of Lemma \ref{lem:superBm}.
Take $m:=\max_{i=1,\ldots,r}m_i$
and $T=\max_{i=1,\ldots,r} T(x_i,m,M,\delta)$, and the proof is complete.
\end{pf}

In the next lemma we prove that given $x$, $y$ and $y^\prime$,
if we start the process with $l$ particles at $x$, after a sufficiently
large time,
with arbitrarily large probability, we will have $l$ particles both at
$y$ and at
$y^\prime$, even if we restrict the process to a large box centered at
$x$ plus a fixed path from $x$ to $y$
and a fixed path from $x$ to $y^\prime$; see Figure~\ref{fig:xtoyy}.
The proof relies on
Lemma~\ref{lem:expgrxtoy} and the central limit theorem.

\begin{figure}

\includegraphics{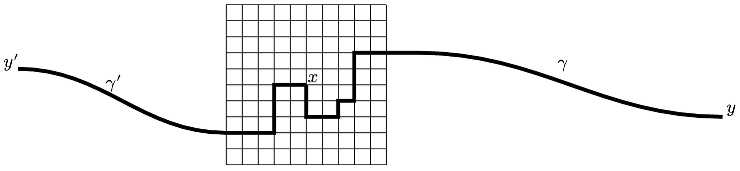}

\caption{From $\ell$ individuals at $x$ to $\ell$ individuals
at $y$ and $y^\prime$.}
\label{fig:xtoyy}
\end{figure}

%
%
%
%
%
%

\begin{lem}\label{lem:probk}
Let $\mu$ be a BRW on ${{\mathbb Z}^d}$, and let $x$, $\lambda$ and
$m$ as in
Lemma~\ref{lem:expgrxtoy}.
Fix $M,\varepsilon>0$. Then choosing $T=T(x,m,M,1)$ as in Lemma~\ref
{lem:expgrxtoy}, for all $t\ge T$ there exists
$\ell(\varepsilon,x,m,M,t) \in\mathbb{N}$ and
%
\begin{equation}
\label{eq:probk} \Prob \bigl(\widetilde\eta_t(y)\ge\ell,\widetilde
\eta_t \bigl(y^\prime \bigr)\ge \ell|\widetilde
\eta_0(x)=\ell \bigr)> 1-\varepsilon,
\end{equation}
for all $\ell\ge\ell(\varepsilon,x,t)$, $l,l^\prime\le M$, $\gamma
$, $\gamma^\prime$ paths of length $l$ and $l^\prime$
from $x$ to $y$ and to~$y^\prime$, respectively,
where $\widetilde\eta_t$ is the BRW restricted to $(x+B(m))\cup
\gamma\cup\gamma^\prime$.
Moreover, if $\mu$ is quasi-transitive, we can choose $m$ and $T$
independent of $x$ and $\ell(\varepsilon,m,M)$
such that \eqref{eq:probk} holds for all $x\in{{\mathbb Z}^d}$ when $t=T$.
\end{lem}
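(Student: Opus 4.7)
The plan is to exploit the branching property to decompose the restricted population starting from $\ell$ particles at $x$ as the sum of $\ell$ iid copies of the population starting from a single particle, apply Lemma~\ref{lem:expgrxtoy} with $\delta=1$ to guarantee that each such copy has mean at least $2$ at both $y$ and $y'$, and then concentrate by Chebyshev (or, equivalently, by the central limit theorem).

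Set $S:=(x+B(m))\cup\gamma\cup\gamma'$ and let the restricted BRW start from $\widetilde\eta_0(x)=\ell$ particles at $x$. By the branching property,
$$\widetilde\eta_t(y)=\sum_{i=1}^\ell Z_i^y,\qquad \widetilde\eta_t(y')=\sum_{i=1}^\ell Z_i^{y'},$$
where $\{(Z_i^y,Z_i^{y'})\}_{i=1}^\ell$ are iid, each distributed as $(\widetilde\eta_t(y),\widetilde\eta_t(y'))$ under $\widetilde\eta_0=\delta_x$. Lemma~\ref{lem:expgrxtoy} applied with $\delta=1$ (separately along $\gamma$ and along $\gamma'$) yields $\mu_y:=\E Z_1^y\ge 2$ and $\mu_{y'}:=\E Z_1^{y'}\ge 2$ whenever $t\ge T(x,m,M,1)$. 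Since $S$ is finite and per-particle branching rates are bounded by $\lambda K$, the total population on $S$ is dominated by a Yule process, so $Z_1^y$ and $Z_1^{y'}$ have finite variance. Chebyshev's inequality then gives
$$\Prob(\widetilde\eta_t(y)<\ell)\le\Prob\left(\left|\sum_{i=1}^\ell Z_i^y-\ell\mu_y\right|\ge\ell\right)\le\frac{\mathrm{Var}(Z_1^y)}{\ell},$$
because $\ell-\ell\mu_y\le -\ell$; an analogous bound holds for $y'$, and a union bound drives the probability of failure below $\varepsilon$ once $\ell$ is large.

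It remains to absorb the dependence of the variances on the particular choice $(y,y',\gamma,\gamma')$ into a uniform threshold, and this is the only obstacle, though a routine finiteness one. For fixed $x,m,M$ there are only finitely many paths of length at most $M$ starting at $x$ in the bounded-degree graph $\Zd$, hence only finitely many admissible quadruples $(y,y',\gamma,\gamma')$; take $\ell(\varepsilon,x,m,M,t)$ to be the maximum of the individual Chebyshev thresholds over this finite list. In the quasi-transitive setting, fix the representatives $x_1,\ldots,x_r$ from Definition~\ref{def:quasit} and set $m:=\max_i m_i$, $T:=\max_i T(x_i,m,M,1)$, and $\ell(\varepsilon,m,M):=\max_i\ell(\varepsilon,x_i,m,M,T)$. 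Any $x\in\Zd$ equals $f(x_j)$ for some $\mu$-invariant bijection $f$ and some $j$; applying $f^{-1}$ identifies in law the process on $S$ rooted at $x$ with the process on $f^{-1}(S)$ rooted at $x_j$, and the pulled-back paths $f^{-1}(\gamma),f^{-1}(\gamma')$ have the same lengths, so the same uniform threshold $\ell(\varepsilon,m,M)$ works for every $x$.
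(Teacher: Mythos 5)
Your proposal is correct and follows the same skeleton as the paper's proof: reduce to the initial configuration $\ell\delta_x$, decompose by the branching property into $\ell$ iid copies rooted at $x$, invoke Lemma~\ref{lem:expgrxtoy} with $\delta=1$ to get per-particle means at least $2$ at $y$ and $y'$, bound the variance via domination by a pure-birth process, concentrate, and take a union bound; the quasi-transitive uniformization via the representatives $x_1,\ldots,x_r$ is also identical. The only deviation is that you use Chebyshev's inequality where the paper invokes the (Berry--Esseen uniform) central limit theorem -- you even flag the interchangeability yourself -- and your explicit remark that there are only finitely many admissible path quadruples to maximize over is a clean way to make the uniformity of $\ell(\varepsilon,x,m,M,t)$ fully explicit, a point the paper leaves more implicit.
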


\begin{pf}
By monotonicity it suffices to prove the result with the event
$(\widetilde\eta_0=\ell\delta_x)$
in place of $(\widetilde\eta_0(x)=\ell)$.

Let $X=(x+B(m))\cup\gamma\cup\gamma^\prime$.
Let us denote by $\{\xi_t\}_{t \ge0}$ the BRW, restricted to $X$,
starting from $\xi_0=\delta_x$.
By Lemma~\ref{lem:expgrxtoy}, there exists $T$
such that $\E(\xi_t(z)|\xi_0=\delta_x)>2$ for all $t\ge T$,
$z=y,y^\prime$. 
A realization of our process is $\widetilde\eta_t=\sum_{j=1}^\ell
\xi_{t,j}$ where
$\{\xi_{t,j}(y)\}_{j\in\N}$
is an i.i.d. family of copies of $\{\xi_t\}_{t \ge0}$.
Fix $z\in\{y,y^\prime\}$.
Since $\xi_{t,j}$ is stochastically dominated by a continuous time
branching process with birth rate
$\lambda\sup_w\sum_{v}\mu(w,v)<+\infty$,
it is clear that $\mathrm{Var}(\xi_{t,j}(z))=:\sigma^2_{t,z}<+\infty
$ (note that the variance depends
on $x$).
Thus by the central limit theorem, if $\ell$ is sufficiently large,
\begin{eqnarray*}
&&\frac\varepsilon4\ge \Biggl\llvert \Prob \Biggl(\sum
_{j=1}^\ell\xi _{t,j}(z)\ge s \bigg|
\xi_{0,j}= \delta_x, \forall j=1,\ldots, \ell \Biggr) \\
&&\hspace*{63pt}{}-1+\phi
\biggl(\frac{s-\ell\E(\xi_t(z)|\xi_0= \delta_x)}{\sqrt
\ell\sigma_{t,z}} \biggr) \Biggr\rrvert
\end{eqnarray*}
uniformly with respect to $s\in\R$, where $\phi$ is the cumulative
distribution function of the standard normal.
Whence there exists $\ell(\varepsilon,x,m,M,z,t)$ such that, for all
$\ell\ge\ell(\varepsilon,x,m,M,z,t)$,
\[
\Prob \bigl(\widetilde\eta_{t}(z)\ge\ell |\widetilde
\eta_0= \ell\delta_x \bigr)\ge 1-\phi \biggl(\sqrt\ell
\frac{1-\E(\widetilde\eta_t(y)|\widetilde
\eta_0={\delta_x})}{\sigma_{t,z}} \biggr)- \frac\varepsilon4\ge 1-\frac\varepsilon2,
\]
since $\sqrt\ell(1-\E(\widetilde\eta_t(z)|\widetilde\eta
_0={\delta_x})/\sigma_{t,z}\to-\infty$ as $\ell\to+\infty$.
Take $\ell(\varepsilon,x,m,M,t):=
\ell(\varepsilon,x,m,M,y,t)
\vee\ell(\varepsilon,x,m,M,y^\prime,t)
$.
Hence \eqref{eq:probk} follows.

If $\mu$ is quasi-transitive, take $\{x_i\}_{i=1}^r$ and $\{m_i\}_{i=1}^r$
as in the proof of Lemma~\ref{lem:expgrxtoy}. It suffices to choose
$m:=\max_{i=1,\ldots,r}m_i$
and $T=\max_{i=1,\ldots,r} T(x_i,\break m,M)$.
\end{pf}

We say that a subset $A$ of ${{\mathbb Z}^d}$ is contained in
$\mathcal C_\infty$ if all the vertices are connected to $\mathcal
C_\infty$ and
all the edges $(x,y)$, with $x,y\in A$, are open.
The following is a lemma on the geometry of $\mathcal C_\infty$ which
states that $\mathcal C_\infty$
contains a biinfinite open path where one can find large boxes at
bounded distance from each other.

\begin{lem}\label{lem:geomcluster}
Let us consider a supercritical Bernoulli percolation on ${{\mathbb Z}^d}$.
For every $m\in\N$ there exists $M=M(m)>0$ such that, a.s.~with
respect to the percolation measure,
the infinite percolation cluster $\mathcal C_\infty$
contains a pairwise disjoint family $\{B_j\}_{j=-\infty}^{+\infty}$
with the following properties:
\begin{longlist}[(1)]
\item[(1)] there exists $\{x_j\}_{j=-\infty}^{+\infty}$, $x_j\in
{{\mathbb Z}^d}
$ for all $j$, and
$B_j=x_j+B(m)$ for all $j$;
\item[(2)] there is a family of open paths $\{\pi_j\}_{j=-\infty
}^{+\infty}$ such that
$x_j\stackrel{\pi_j}{\longleftrightarrow}x_{j+1}$, and $|\pi_j|\le
M$ for all $j$.
\end{longlist}
\end{lem}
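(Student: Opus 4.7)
The plan is a block renormalisation argument: at a coarse scale we produce a supercritical Bernoulli site percolation on $\Zd$ whose open sites correspond to ``good'' cubes of $\Cinf$, each containing a translate of $B(m)$ lying in $\Cinf$ and connected to its neighbouring good cubes by open paths of uniformly bounded length. A bi-infinite path in the renormalised lattice then yields the required family $\{B_j\}_{j\in\Z}$.

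Concretely, tile $\Zd$ into pairwise disjoint cubes $C_i=iN+[0,N)^d$, $i\in\Zd$, of side $N$ to be chosen large. Call $C_i$ \emph{good} if
\begin{enumerate}
\item there exists $x_i\in C_i$ with $x_i+B(m)\subset C_i$, every edge inside $x_i+B(m)$ open, and $x_i\in\Cinf$;
\item for every nearest-neighbour cube $C_j$ that satisfies (i), there is an open path inside $C_i\cup C_j$ of length at most $M=M(N)$ joining $x_i$ to $x_j$.
\end{enumerate}
The standard renormalisation theory of supercritical Bernoulli percolation (see \cite[Chapter~7]{cf:Grimmett}) yields $\Prob(C_i\text{ is good})\to 1$ as $N\to\infty$: the local event in (i) is tried at many translates inside $C_i$, so at least one succeeds with probability tending to $1$, while (ii) follows from the high-probability existence of crossings of $C_i$ inside $\Cinf$ that connect to an in-cube backbone. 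Because $\{C_i\text{ is good}\}$ depends only on edges within a bounded distance from $C_i$, the Liggett--Schonmann--Stacey stochastic domination theorem provides, for $N$ large, a Bernoulli site percolation on $\Zd$ of parameter arbitrarily close to $1$ dominated by the good-cube field.

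Being supercritical and with $d\ge 2$ (which is forced by the existence of $\Cinf$), this renormalised percolation almost surely contains a bi-infinite self-avoiding open path $(C_{i_j})_{j\in\Z}$ of good cubes; this last fact is a standard soft consequence of uniqueness of the infinite cluster and of supercriticality on $\Zd$ with $d\ge 2$. Setting $B_j:=x_{i_j}+B(m)$ then yields a pairwise disjoint family of boxes (the cubes $C_{i_j}$ are themselves disjoint) contained in $\Cinf$ by (i), while (ii) supplies the required open paths $\pi_j$ with $|\pi_j|\le M$.

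The main technical obstacle is the construction of the ``good''-cube event so that properties (i) and (ii) are simultaneously very likely and measurable with respect to a bounded set of edges around $C_i$; this is precisely what the renormalisation estimates in \cite[Chapter~7]{cf:Grimmett} deliver. Once stochastic domination is in place, extracting the bi-infinite path and packaging it into the family $\{B_j\}$ together with the short connecting paths $\pi_j$ is routine.
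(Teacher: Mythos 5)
Your overall plan (coarse-grained tiling, ``good'' cubes containing a seed box $B(m)$, stochastic domination by a high-density renormalized percolation, extraction of a bi-infinite self-avoiding path) is the same block-renormalization strategy the paper uses, but as written two steps have genuine gaps.

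First, locality. Your condition (i) requires $x_i\in\Cinf$, and your condition (ii) refers to ``every nearest-neighbour cube $C_j$ that satisfies (i)''. Both depend on the percolation configuration arbitrarily far from $C_i$, so the claim that $\{C_i\text{ is good}\}$ ``depends only on edges within a bounded distance from $C_i$'' is false, and the Liggett--Schonmann--Stacey domination cannot be invoked for this field. The paper avoids exactly this trap: the seed event $V_\Gamma$ is \emph{purely local} (existence of a box with all edges open inside $\Gamma$), one first builds an infinite cluster of occupied boxes in the renormalized lattice via Pisztora's Proposition~4.1 (the enhanced version valid for $d\ge 2$ using Couronn\'e--Messikh), and only afterwards identifies the resulting microscopic infinite cluster with $\Cinf$ by uniqueness. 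Membership in $\Cinf$ is a conclusion, not a defining condition.

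Second, connecting the seed to the backbone. You take a seed of fixed size $B(m)$ and appeal vaguely to ``crossings $\ldots$ that connect to an in-cube backbone''. But a fixed-size seed placed somewhere in a box of side $N$ has no reason to be intersected by the crossing cluster of that box; the probability that a given small seed attaches to the crossing cluster does not tend to $1$. The paper chooses seeds of side $\approx N^{1/2d}$ precisely so that, once a seed exists, it contains an open segment of length larger than $N^{1/2}/10$, which by the structure of Pisztora's renormalization is automatically connected to the crossing cluster, and hence short open paths of length at most $2N^d$ between seeds in adjacent occupied boxes come for free. Your proposal needs either the growing-seed device or an explicit local condition that the seed is attached to the spanning cluster of $C_i$ (and a high-probability estimate for it). Without one of these, condition (ii) does not hold with probability tending to $1$, and the quantitative bound $|\pi_j|\le M$ with $M$ depending only on $m$ is not justified.

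Once these two points are repaired, the rest of your argument (domination by a very supercritical renormalized percolation, existence of a bi-infinite self-avoiding path of good boxes, and reading off $\{B_j\}$ and $\{\pi_j\}$) is sound and matches the paper.
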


\begin{pf}
For every $N \in\mathbb{N}\setminus\{0\}$, we define the
$N$-partition of $\mathbb{Z}^d$ as the collection
$\{2N x + B(N)\dvtx x \in\mathbb{Z}^d\}$.

We use \cite{cf:Pisz}, Proposition~4.1, which holds also for $d=2$
according to \cite{cf:CM2004}, Proposition~11.
In order to achieve in \cite{cf:CM2004}, Proposition~11, the same
generality of \cite{cf:Pisz}, Proposition~4.1,
one has to take into account also a general family of events $\{
V_\Gamma\}_{\Gamma}$
(indexed on the boxes of the collection of the $N$-partitions as $N \in
\mathbb{N}\setminus\{0\}$)
satisfying
equation (4.4) of \cite{cf:Pisz}. This can be easily done by noting
that the inequality (4.25) of \cite{cf:Pisz}
still holds in the case $d=2$.
From now on, when we refer to \cite{cf:Pisz}, Proposition~4.1, we mean
this ``enhanced'' version which holds
for $d \ge2$.

We define $V_{\Gamma}:={}$``there exists a seed $x_\Gamma+B(N^{1/2})
\subseteq\Gamma$'' where
by $\mathit{seed}$ we mean a box with no close edges in the
percolation process
(to avoid a cumbersome notation, we omit the integer part symbol
$\llcorner\cdot\lrcorner$ in the side length).
Note that $V_{\Gamma}$ is measurable with respect to the $\sigma
$-algebra of the percolation process restricted to~$\Gamma$,
thus independent from the rest of the process.
Given a box $\Gamma$ of side length $N$, by partitioning it into
disjoint boxes of side length
$N^{1/2}$, we obtain the following upper estimate
$\bolds{\Phi}[(V_{\Gamma}^c)] \le
(1-p)^{(N/N^{1/2})^d}=(1-p)^{N^{d/2}}\to0$ as $n \to\infty$,
where $\bolds{\Phi}$ is the law of the Bernoulli percolation on
$\mathbb{Z}^d$ with parameter $p$.
This implies that $\{V_\Gamma\}_{\Gamma}$ satisfies equation (4.4) of
\cite{cf:Pisz}.

We know from \cite{cf:Pisz}, Proposition~4.1, that, for any fixed
supercritical Bernoulli percolation on ${{\mathbb Z}^d}$
(``microscopic'' percolation in this context), for every sufficiently
large $N$ the renormalized percolation
(``macroscopic'' percolation from now on) stochastically dominates a
Bernoulli site percolation of arbitrarily large parameter.
Let us describe briefly, how the macroscopic percolation is constructed
from the microscopic one.
For every $k=\pm1,\ldots, \pm d$ we define the $k$th face of
the box $B(N)$ as the
set $\{y \in{{\mathbb Z}^d}\dvtx y(|k|)=\operatorname{sgn}(k)N\}$, that
is, the face
in the $k$th direction.
Roughly speaking, in the renormalized macroscopic process 
a box
$\Gamma:=2N x + B(N)$ ($x \in{{\mathbb Z}^d}$) of the $N$-partition
is \textit
{occupied} if and only if:
\begin{longlist}[(1)]
\item[(1)] there exists a unique \textit{crossing cluster}, that is,
a set of open edges containing open paths connecting
any two opposite faces of the boxes,
\item[(2)] any open path $\gamma$ such that $\operatorname{diam}(\gamma)
\ge N^{1/2}/10$ is connected to
the crossing cluster,
\item[(3)] for every $k=\pm1,\ldots, \pm d$,
if $D_k$ is a translation of the box $B(N/4)$ centered at the middle
point of the $k$th face of the box
$2N x + B(N)$, then there exists a path connecting the $k$ face and the
$-k$ face of $D_k$,
\item[(4)] $V_{\Gamma}$ holds.
\end{longlist}
If $\Gamma$ and $\Gamma^\prime:=2N x^\prime+ B(N)$ are occupied,
where $x^\prime(i)-x(i)=\delta_{i,k}$ (i.e.,
$\Gamma^\prime$ is adjacent
to $\Gamma$ in the $k$th direction), then the crossing clusters of
these two boxes are connected by (2), (3) and
by noting that $D_k=D_{-k}^\prime$ [where $D_k$ and $D_{-k}^\prime$
are the boxes described in (3) related to $\Gamma$
and $\Gamma^\prime$, resp.].

We consider $N > m^{2} \vee23$ ($N \ge24$ is required in \cite
{cf:CM2004,cf:Pisz}).
Thus the seed $x_\Gamma+B(N^{1/2})\subseteq\Gamma
$, when it exists,
it contains an open path of diameter $2dN^{1/2}>N^{1/2}/10$, and hence
it is connected to the crossing cluster in $\Gamma$
by construction of the renormalized process; see \cite{cf:Pisz},
Section~4.2 or \cite{cf:CM2004}, Section~5. Moreover
it contains a translated box $x_\Gamma+B(m)$. By \cite{cf:Pisz}, Proposition~4.1, given a supercritical Bernoulli percolation
on ${{\mathbb Z}^d}$, 
for all sufficiently large $N$,
there exists an infinite open cluster of boxes in the ``macroscopic''
renormalized graph; see \cite{cf:Pisz} for details on the definition of
occupied box.
This implies the existence of an infinite
cluster (in the original microscopic percolation) which contains a seed
no smaller than the box $B(N^{1/2})$ in each
occupied box of the macroscopic cluster; see Figure~\ref{fig:seeds}
where the grayed boxes are occupied.

\begin{figure}

\includegraphics{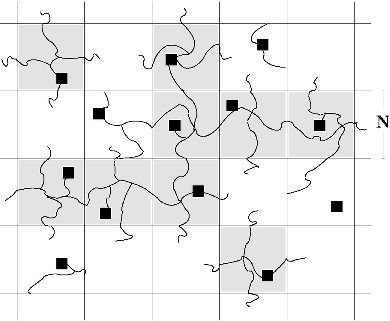}

\caption{The occupied boxes in the renormalized percolation.}
\label{fig:seeds}
\end{figure}

By uniqueness, this infinite microscopic cluster coincides with
$\mathcal{C}_\infty$.
Clearly, by construction, the centers of the seeds in two adjacent
occupied ``macroscopic'' boxes are connected (in
$\mathcal{C}_\infty$) by a path contained into these two boxes;
clearly, the length of such a path is bounded from above by $M:=2N^{d}$.
Since the percolation cluster in the renormalized ``macroscopic''
process contains a bi-infinite self-avoiding path of open boxes,
the proof is complete.
\end{pf}

\begin{pf*}{Proof of Theorem~\ref{thm:approxs}}
Even though (1) follows easily from (2) and the diagram in Figure~\ref
{fig:diagram},
we prove it separately in order to introduce the key idea, which will
be used later
to prove (2), in a simpler case.
(1) Since ${{\mathcal C}_\infty}$ is a subgraph of ${{\mathbb Z}^d}$,
we have that
$\lambda_s(\mathcal C_\infty,\mu
)\ge\lambda_s({{\mathbb Z}^d},\mu)$ (remember that these critical
values do not
depend on the finite, nonzero initial condition).
Take $\lambda>\lambda_s({{\mathbb Z}^d},\mu)$: our goal is to prove that
$\lambda>\lambda_s(\mathcal C_\infty,\mu
)$.
By Lemma~\ref{lem:superBm} we know that there exists (a smallest) $m$
such that
$\lambda>\lambda_s(x+B(m),\mu)$ for all $x \in{{\mathbb Z}^d}$. Let
$M$, $\{
x_j\}_{j=-\infty}^{+\infty}$,
$\{\pi_j\}_{j=-\infty}^{+\infty}$ as in Lemma~\ref{lem:geomcluster}.
By Lemma~\ref{lem:probk} and by monotonicity,
for all $\varepsilon>0$ there exist $T$ and $\ell$ such that
\[
\Prob \bigl(\widetilde\eta_T(x_{j-1})\ge\ell,\widetilde
\eta _T(x_{j+1})\ge \ell|\widetilde\eta_0(x_j)=
\ell \bigr)> 1-\varepsilon,
\]
where $\{\widetilde\eta_t\}_{t\ge0}$ is the BRW (starting from the
initial condition $\ell\delta_{x_0}$) restricted to
$\mathcal A=\bigcup_{j=-\infty}^\infty(x_j+B(m))\cup\pi_j$ (which,
by Lemma~\ref{lem:geomcluster}, is a subset
of ${{\mathcal C}_\infty}$ which exists a.s.~whenever the cluster is
infinite). We
recall that the critical parameters
of the BRW are independent of $\ell>0$.

We construct a process $\{{\xi}_t\}_{t\ge0}$ on $\mathcal A$, by
iteration of independent copies of
$\{\widetilde\eta_t\}_{t\ge0}$ on time intervals $[nT,(n+1)T)$, and
we associate it with a percolation
process $\varrho$ on $\Z\times\vec{\N}$ ($\Z$ representing space
and $\vec\N$ representing time),
where $\vec{\N}$ is the oriented graph on $\N$ where all edges are
of the type $(n,n+1)$.
We index the family of copies needed as $\{\widetilde\eta_{(i,j)}\}
_{i\in\Z,j\in\N}$ and use
$\widetilde\eta_{(i,j),t}$ when also the dependence on time has to be
stressed; moreover
$\widetilde\eta_{(i,j),0}=\ell\delta_{x_i}$ for all $i,j$.
The construction will be made in such a way that $\widetilde\eta_t$
stochastically dominates
$\xi_t$ for all $t\ge0$ and, whenever in the percolation process
$\varrho$ we have that
$(0,0)\stackrel{\varrho}{\rightarrow}(j,n)$, then $\xi_{nT}(x_j)\ge
\ell$.

Let us begin our iterative construction with its first step.
Start $\{\widetilde\eta_{(0,0),t}\}_{t\ge0}$,
and let $\xi_t=\widetilde\eta_{(0,0),t}$
for $t\in[0,T]$; thus $\xi_0=\widetilde\eta_{(0,0),0}=\ell\delta
_{x_0}$. In the percolation process, the edge $(0,0)\stackrel{\varrho
}{\rightarrow}(j,1)$, $j=\pm1$, is open
if $\widetilde\eta_{(0,0),T}(x_j)\ge\ell$.
Now suppose that we constructed $\{{\xi}_t\}_{t\ge0}$ for $t\in
[0,nT]$; to construct it for $t\in(nT,(n+1)T]$,
we put $\xi_t=\sum_{h\in[-n,n]\dvtx\xi_{nT}(x_h)\ge\ell
}\widetilde
\eta_{(h,n),t-nT}$ for all
$t\in(nT,(n+1)T]$.
In the percolation~$\varrho$, for all $(i,n)$ such that there is an
open path
$(0,0)\stackrel{\varrho}{\rightarrow}(i,n)$, we connect
$(i,n)\stackrel{\varrho}{\rightarrow}(j,n+1)$, $j=i\pm1$,
if $\widetilde\eta_{(i,n),T}(x_j)\ge\ell$. 

\begin{figure}

\includegraphics{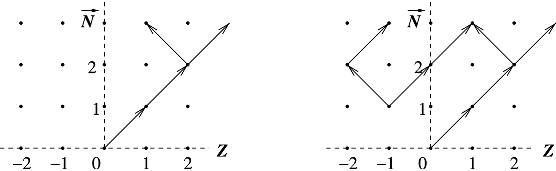}

\caption{A realization of the cluster in the percolation $\varrho$
(left) and $\varrho_2$ (right).}
\label{fig:percolation}
\end{figure}

In order to show that, by choosing $\ell$ sufficiently large, 
with positive probability there is an open path in the percolation
$\varrho$, from $(0,0)$
to $(0,n)$ for infinitely many~$n$ (which means that at arbitrarily
large times there are at least
$\ell$ individuals at $x_0$ in the original process), we need a
comparison with a
one-dependent oriented percolation $\varrho_2$ on $\Z\times\vec\N$.
This new percolation $\varrho_2$ is obtained by ``enlarging'' $\varrho
$ in the following way:
for all $(i,n)\in\Z\times\vec\N$, we connect
$(i,n)\stackrel{\varrho_2}{\rightarrow}(j,n+1)$, $j=i\pm1$, if
$\widetilde\eta_{(i,n),T}(x_j)\ge\ell$.
Note that $\varrho$ differs from $\varrho_2$ simply in the fact that
in $\varrho$ the opening procedure
takes place only from sites already connected to $(0,0)$ (see Figure~\ref{fig:percolation}).
By induction on $n$, this coupled construction implies that there
exists a $\varrho_2$-open path
from $(0,0)$ to $(i,n)$ if and only if there exists a $\varrho$-open
path from $(0,0)$ to $(i,n)$.
By Lemma~\ref{lem:probk}, for all $\varepsilon>0$, by choosing $\ell
$ sufficiently large, we have that
for $\varrho_2$ the probability of opening all edges from $(i,n)$ is
at least $1-\varepsilon$.
Let us choose $\varepsilon$ such that the one-dependent percolation
$\varrho_2$ dominates a supercritical
independent (oriented) Bernoulli percolation.
According to Lemma~\ref{lem:recurrence}, the infinite Bernoulli
percolation cluster
in the cone $\{(i,j) \dvtx j \ge|i|\}$
contains infinitely many sites of type $(0,n)$ almost surely.
Hence, by coupling, there is a positive probability that the one-dependent
infinite percolation cluster contains infinitely many sites of type
$(0,n)$ as well.

The first claim follows since the $\lambda$-BRW on ${{\mathcal
C}_\infty}$ (starting
with $\ell$ particles at ${x_0}$)
stochastically dominates
$\{\widetilde\eta_t\}_{t\ge0}$, which in turn dominates $\{\xi_t\}
_{t\ge0}$, and by comparison
with $\varrho_2$ we know that $\xi_{nT}(x_0)\ge\ell$ for infinitely
many $n \in\N$.

(2) Let us now consider the $k$-type contact process $\{\eta^k_t\}
_{t\ge0}$.
Take $\lambda>\lambda_s({{\mathbb Z}^d},\mu)$, $m$ as in the
previous step,
and $\mathcal A$
(along with $\{x_j\}_{j=-\infty}^{+\infty}$ and $\{\pi_j\}
_{j=-\infty}^{+\infty}$)
given by Lemma~\ref{lem:geomcluster} as before.
Consider the restriction $\{\widetilde\eta^k_t\}_{t\ge0}$ of the
$k$-type contact process to $\mathcal A$.
Let us begin by proving that
$\lambda>\lambda_s^k(\mathcal C_\infty,\mu)$ for all $k$
sufficiently large.
To this aim it is enough to prove that for the above fixed $\lambda$,
$\{\widetilde\eta^k_t\}_{t\ge0}$
survives locally for all $k$ sufficiently large.

Fix $\varepsilon>0$, and let $T$ and $\ell$ be given by Lemma~\ref
{lem:probk}, such that
\[
\Prob \bigl(\widetilde\eta_T(y)\ge\ell,\widetilde
\eta_T \bigl(y^\prime \bigr)\ge \ell|\widetilde
\eta_0=\ell \delta_x \bigr)> 1-\varepsilon.
\]
Let $N^x_T$ be the total progeny up to time $T$ (including the initial
particles), in
the BRW $(\mathcal A,\mu)$, starting from $\ell$ individuals at site $x$.
Define $N_T$ as the total number of individuals ever born (including
the initial particles), up to time $T$, in a branching process
with rate $\lambda K$, starting with $\ell$ individuals at time 0:
$N_T$ stochastically dominates $N_T^x$
for all $x\in\mathcal A$.
We have
\begin{eqnarray*}
&&\Prob \bigl(
\widetilde\eta_T(y)\ge\ell,\widetilde\eta_T
\bigl(y^\prime \bigr)\ge \ell,N_T^x\le n|
\widetilde\eta_0=\ell\delta_x \bigr)
\\
&&\qquad \ge\Prob \bigl(\widetilde\eta_T(y)\ge\ell,
\widetilde\eta_T \bigl(y^\prime \bigr)\ge\ell|\widetilde
\eta_0=\ell\delta_x \bigr) +\Prob \bigl(N_T^x
\le n|\widetilde\eta_0=\ell\delta_x \bigr)-1
\\
&&\qquad\ge\Prob \bigl(\widetilde\eta_T(y)\ge\ell,
\widetilde\eta_T \bigl(y^\prime \bigr)\ge\ell|\widetilde
\eta_0=\ell\delta_x \bigr) +\Prob(N_T\le
n)-1 > 1-2\varepsilon,
\end{eqnarray*}
for all $n\ge\bar n$ where $\bar n$ satisfies $\Prob(N_T\le\bar n)>
1-\varepsilon$
($\bar n$ is independent of $x$).

Define an auxiliary process $\{\bar\eta_t\}_{t\in[0,T]}$ obtained
from $\{\widetilde\eta_t\}_{t\ge0}$
by killing all newborns after that the total progeny has reached size
$\bar n$.
This implies that, in the process $\{\bar\eta_t\}_{t\in[0,T]}$, the
progeny does not reach sites at distance
larger than $\bar n$ from the $\ell$ ancestors, nor it goes beyond the
$\bar n$th generation.
In particular, when started from $\ell\delta_x$, the processes $\{
\bar\eta_t\}_{t\in[0,T]}$ and
$\{\widetilde\eta_t\}_{t\ge0}$ coincide, up to time $T$, on the
event $(N_T\le\bar n)$.
Thus
\begin{eqnarray*}
&&\Prob \bigl(\bar\eta_T(y)\ge\ell,\bar\eta_T
\bigl(y^\prime \bigr)\ge\ell|\bar \eta_0=\ell
\delta_x \bigr)\\
&&\qquad \ge \Prob \bigl(\bar\eta_T(y)\ge\ell,
\bar \eta_T \bigl(y^\prime \bigr)\ge\ell,N_T^x
\le n|\bar\eta_0=\ell\delta_x \bigr)
\\
&&\qquad =\Prob \bigl(\widetilde\eta_T(y)\ge\ell,\widetilde
\eta_T \bigl(y^\prime \bigr)\ge\ell,N_T^x
\le n|\widetilde\eta_0=\ell\delta_x \bigr) >1-2
\varepsilon.
\end{eqnarray*}
The percolation construction of step (1) can be repeated by using
i.i.d. copies of $\{\bar\eta_t\}_{t\in[0,T]}$
instead of $\{\widetilde\eta_{(i,j)}\}_{i\in\Z,j\in\N}$. Call $\{
\bar\xi_t\}_{t\ge0}$ the corresponding
process constructed from these copies as $\{\xi_t\}_{t\ge0}$ was
constructed from $\{\widetilde\eta_{(i,j)}\}_{i\in\Z,j\in\N}$.
%
As in step (1), by choosing $\varepsilon$ sufficiently small, we have
that $\bar\xi_{nT}(x_0) \ge\ell$ for infinitely many
$n \in\N$.

Let $H$ be the number of paths in ${{\mathbb Z}^d}$ of length $\bar n$,
containing
the origin: $H$ is an upper bound
for the number of such paths in ${{\mathcal C}_\infty}$ or in
$\mathcal A$. It is easy
to show that $\bar\xi_t(x)\le H\bar n$
for all $t$ and $x$. Thus if we take $k\ge H\bar n$, then $\widetilde
\eta^k_t$ stochastically dominates $\bar\xi_t$.
The supercriticality of the percolation on $\Z\times\vec\N$
associated to $\bar\xi$ implies that $\{\widetilde\eta^k_t\}_{t\ge0}$
survives locally. The inequality $\lambda> \lambda_s^k(\mathcal
{C}_\infty,\mu)$
follows since $\{\eta^k_t\}_{t\ge0}$ stochastically dominates $\{
\widetilde\eta^k_t\}_{t\ge0}$.
This implies that, for every sufficiently large $k$, $\lambda_s(\Z
^d,\mu) \le
\lambda_s^k(\Z^d,\mu) \le\lambda_s^k(\mathcal{C}_\infty,\mu)
<\lambda$ (see Figure~\ref{fig:diagram}), and the proof is complete.
\end{pf*}

We discuss here an interesting result on oriented percolation which is
used in the proofs of Theorem~\ref{thm:approxs} and
\cite{cf:BZ3}, Theorem~5.1.

\begin{lem}\label{lem:recurrence}
Consider a supercritical Bernoulli oriented percolation in $\Z\times
\vec{\N}$:
almost every infinite cluster contains
an infinite number of vertices of type $(0,n)$. The same holds for a
supercritical Bernoulli oriented percolation in $\N\times\vec{\N}$.
\end{lem}

\begin{pf}
Let us begin with the percolation in $\Z\times\vec{\N}$. By $(i,j)
\to(i^\prime,j^\prime)$ we mean that there is an open path in the percolation
from $(i,j)$ to $(i^\prime,j^\prime)$, while by $(i,j) \to\infty$
we mean that there is an infinite open path
from $(i,j)$.
By using the translation invariance of the
percolation law, the results of \cite{cf:Durrett84}, Section~3 [in
particular equations~(7) and (11)] imply that a.s.~if
$(i,j) \to\infty$, then for all $i^\prime\in\Z$ there exists
$j^\prime\ge j$ such that $(i,j) \to(i^\prime, j^\prime)$.
This implies that a.s., with respect to the percolation measure, every
vertex satisfying $(i,j) \to\infty$
has the \textsl{covering property}; that is,
if we project on the first coordinate (i.e.,~on $\Z$) the vertices in
the cluster ``branching'' from it, we obtain the whole set $\Z$.
Let $J:=\min\{j^\prime\dvtx(i,j^\prime) \to\infty\mbox{ for
some } i \in\Z\}$ be the bottom level of the infinite cluster;
for all $j \ge J$
there exists $i \in\Z$ such that $(i,j) \to\infty$.
Consider the set of infinite clusters which contain just a finite
number of vertices of type $(0,n)$;
denote by $(0,N)$ the ``highest''
of such vertices ($N$ depending on the cluster), then there exists $i$
(depending on the cluster)
such that $(i,N+1)\to\infty$. This implies that $(i,N+1)$ is in the
infinite cluster a.s.
By the covering property above, the probability that there are no paths
from $(i,N+1)$ to $(0,j^\prime)$
(for some $j^\prime\ge N+1$) is $0$. Thus the set of infinite clusters
containing a finite number
of vertices of type $(0,n)$ has probability $0$.

In the case of the oriented Bernoulli percolation on $\N\times\vec
{\N}$, we proceed analogously.
Observe that this percolation can be obtained from the oriented
Bernoulli percolation on $\Z\times\vec{\N}$ by
deleting all edges outside $\N\times\vec{\N}$; this defines a
coupling between these percolation processes.
We use here $(i,j) \rightsquigarrow(i^\prime, j^\prime)$ for an open
path in the oriented percolation in $\N\times\vec{\N}$ and again
$(i,j) \to(i^\prime, j^\prime)$ for a path in the oriented
percolation in $\Z\times\vec{\N}$ (clearly the existence of the
first one implies the existence of the last one).
Since the infinite cluster in $\Z\times\vec{\N}$ is unique a.s.,
then the infinite cluster $\N\times\vec{\N}$ is a.s.~a subset
of the previous one.
In the supercritical case, for all $j \ge\min\{j^\prime\dvtx
(i,j^\prime) \rightsquigarrow\infty\mbox{ for some } i \in\N\}$,
there exists $i\in\N$ such that $(i,j) \rightsquigarrow\infty$;
thus $(i,j) \to\infty$.
We proved before that a.s. $(i,j) \to(0,j^\prime)$ for some
$j^\prime\ge j$. Let us take the smaller of such
$j^\prime$s, say $j^\prime_0$. Hence $(i,j) \to(0,j^\prime_0)$ and
the connecting path is entirely contained in $\N\times\vec{\N}$, thus
$(i,j) \rightsquigarrow(0,j^\prime_0)$. The conclusion follows as in
the previous case.
\end{pf}

\begin{pf*}{Proof of Theorem~\ref{thm:approxw}}
If follows easily from Theorem~\ref{thm:approxs}, the hypothesis
$\lambda_s({{\mathbb Z}^d},\mu)=\lambda_w({{\mathbb Z}^d},\mu)$
and the diagram shown in Figure~\ref{fig:diagram}.
\end{pf*}

\begin{pf*}{Proof of Theorem~\ref{thm:RBRW}}
Let $\varepsilon>0$ such that $c(0)-\varepsilon>\lambda_s({{\mathbb
Z}^d},\mu)$.
By the assumptions on $c$, there exists $\delta>0$ such that
$c(z)>c(0)-\varepsilon$ for all $z\in[0,\delta]$.
From Theorem~\ref{thm:approxs} we know that there exists $k_0$ such
that the $k$-type contact
process $({{\mathcal C}_\infty},\mu)$ associated with $\lambda:=c(0)-\varepsilon$
survives locally, for all $k\ge k_0$.
Moreover, there exists $N_0$ such that $\delta N>k_0-1$ for all $N\ge N_0$.
Since $c_N(i)\ge\lambda\ident_{[0,k-1]}(i)$ for all $i\in\N$ (see
Figure~\ref{fig:c}), by coupling we have local survival
for the RBRWs $({{\mathcal C}_\infty},\mu,c_N)$ for all $N\ge N_0$.
\end{pf*}

\begin{figure}

\includegraphics{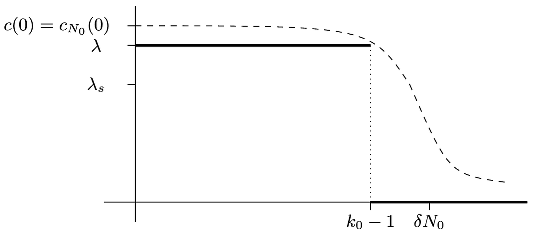}

\caption{Comparison between $c_{N_0}$ (dashed) and the $k_0$-type
contact process (thick).}
\label{fig:c}
\end{figure}

\begin{pf*}{Proof of Corollary~\ref{cor:BLZ}}
(1) It suffices to note that the total number of individuals is
dominated by the total number of
particles in a continuous-time branching process with breeding
parameter $\alpha+\beta$; for the details, see \cite{cf:BLZ},
Theorem~1(1).

(2) Note that $\mu$ is translation invariant, hence quasi-transitive.
The claim follows from Theorem~\ref{thm:RBRW} since $\lambda
_s({{\mathbb Z}^d},\mu)=(\alpha+\beta)^{-1}$ and $c(0)=1$.
\end{pf*}

\begin{rem}\label{rem:liggett}
In \cite{cf:LiggSpitz} the process is constructed by means
of a semigroup of operators on $\operatorname{Lip}({\mathcal{W}})$ (the space of
Lipschitz functions on the configuration space~$\mathcal{W}$).
In \cite{cf:BPZ} this technique is applied to the construction of the
restrained BRWs where $(\mu(x,y))_{x,y \in X}$ is a stochastic matrix
adapted to a graph with bounded geometry. Our definition of $\mu$ is
more general.
The only difference between the construction needed here and those in
\cite{cf:LiggSpitz,cf:BPZ} consists in the choice of
the configuration space $\mathcal{W}$ and its norm; we refer to~\cite
{cf:LiggSpitz} for the notation and details.
As in \cite{cf:LiggSpitz,cf:BPZ} we consider the space $\mathcal
{W}:=\{\eta\in\mathbb{N}^X\dvtx\sum_{x \in X} \eta(x) \alpha
(x)<+\infty\}$
where its metric is defined by $\|\eta-\bar\eta\|:=
\sum_{x \in X}|\eta(x)-\bar\eta(x)|\alpha(x)$. Our choice of the
positive function $\alpha\dvtx X \to(0,+\infty)$
is made in such a way that $\sum_{y \in X} \mu(x,y) \alpha(y) \le
\widetilde K \alpha(x)$ for all $x \in X$
(and some fixed $\widetilde K >0$). There are many ways to do this:
a possible choice is $\alpha(x):=\sum_{n=0}^\infty\widetilde K^{-n}
\sum_{y \in X}\mu^{(n)}(x,y) b(y)$
where $b\dvtx X \to(0,+\infty)$
is a fixed positive, bounded function and $\widetilde K>\sup_{x \in X}
\limsup_{n \to\infty} \sqrt[n]{\sum_{y \in X}\mu^{(n)}(x,y)}$
[take, e.g., $\widetilde K>\sup_{x \in X} \sum_{y \in X}\mu
(x,y)$]. Once $\alpha$ is chosen,
the rest of the construction of the process is carried on as
in \cite{cf:LiggSpitz,cf:BPZ}. In particular the system of
differential equations satisfied by $\{\E(\eta_t(y)|\eta_0=\delta
_x)\}_{x \in X}$
can be explicitly derived from \cite{cf:LiggSpitz}, Lemmas~2.12 and~2.16(e).
\end{rem}

\section*{Acknowledgment}
The authors are grateful to the anonymous referee for carefully
reading the manuscript and for useful suggestions.

%





\printaddresses
\end{document}